\documentclass[a4paper,12pt]{article}
\usepackage[utf8]{inputenc}
\usepackage[T1]{fontenc}
\usepackage{pstricks}
\usepackage{pstricks-add}
\usepackage[english]{babel} 

\usepackage{amsmath, amsfonts, amssymb, amstext, amsthm} 
\usepackage{graphicx, subfigure} 
\usepackage{graphicx, subfigure} 
\usepackage[affil-it]{authblk} 
\usepackage{tikz}
\usepackage[titletoc,title]{appendix}

\usepackage[colorlinks=true,bookmarks=true,citecolor=blue]{hyperref}

\setlength{\textheight}{24.2cm}
\setlength{\textwidth}{16.0cm}
\setlength{\oddsidemargin}{0.0cm}
\setlength{\evensidemargin}{0.0cm}
\setlength{\topmargin}{-1cm} 

\def\R{\mathbb{R}}

\def\e{\varepsilon}

\def\a{\alpha}

\def\la{\lambda}
\def\d{\delta}
\def\k{\kappa}
\def\o{\omega}

\def\fp{f'(0)}

\def\vp{\varphi}

\def\us{\overline{u}}

\def\ut{\tilde{u}}

\def\lp{\left(}
\def\rp{\right)}
\def\lc{\left[}
\def\rc{\right]}
\def\lb{\left|}
\def\rb{\right|}
\def\lV{\left\Vert}
\def\rV{\right\Vert}

\def\ML{\mathcal{L}}
\def\MD{\mathcal{D}}

\def\MR{\mathcal{R}}
\def\MN{\mathcal{N}}

\newtheorem{theorem}{Theorem}
\newtheorem{prop}[theorem]{Proposition}
\newtheorem{lem}[theorem]{Lemma}

\theoremstyle{definition}

\newcounter{hyp}

\theoremstyle{remark}
\newtheorem{remark}{Remark}[section]

\title{Entire solution in cylinder-like domains for a bistable reaction-diffusion equation}
  \author{Antoine Pauthier%
  \thanks{e-mail: \texttt{apauthie@umn.edu}}}
\affil{\footnotesize School of Mathematics, University of Minnesota,\\  206 Church Street S.E, Minneapolis, MN 55455}

\begin{document}
  \maketitle

\begin{abstract}
We construct nontrivial entire solutions for a bistable reaction-diffusion equation in a class of domains that are unbounded in 
one direction. The motivation comes from recent results of Berestycki, Bouhours, and Chapuisat \cite{BBC} concerning 
propagation and blocking phenomena in infinite domains. A key assumption in their study was the "cylinder-like"
 assumption: their domains are supposed to be straight cylinders in a half space. The purpose of this paper is to
 consider domains that tend to a straight cylinder in one direction. 
 We also prove the existence of an entire solution for a one-dimensional problem with a non-homogeneous linear term.
%
%
%
\end{abstract}

  \tableofcontents

\section{Introduction}
The purpose of this paper is to construct a non-trivial entire solution for a bistable equation in an unbounded domain in one direction. 
The equation under study is the following parabolic problem with Neumann boundary condition:
\begin{equation}\label{Neumannpb}
\begin{cases}
 \partial_t u(t,x,y)-\Delta u(t,x,y) =  f(u), & \ t\in\R,\ (x,y)\in\Omega, \\
 \partial_\nu u(t,x,y) = 0, & \ t\in\R,\ (x,y)\in\partial\Omega.
\end{cases}
\end{equation}
The domain $\Omega\subset\R^{N+1}$ is supposed to be smooth enough and infinite in one direction. 
The boundary condition is of Neumann type, $\partial_\nu u := \overrightarrow{\nu}.\nabla u=0$ where $\overrightarrow{\nu}$ 
is the outward normal derivative to $\Omega$ in $\partial \Omega.$
To emphasize the role of this 
main direction we write the spatial coordinates are $(x,y)$ with $x\in\R$ and $y\in\R^N.$ The domain is bounded in the transverse direction $y$ and tends to 
a straight cylinder as $x$ goes to $-\infty:$ we suppose that for all $x\in\R$ there exists a non-empty bounded section $\omega(x)\subset\R^N$   
such that
\begin{equation}\label{domaine1}
\begin{cases}
 \Omega=\left\{ (x,y),\ x\in\R,\ y\in \omega(x)\right\}, \\
\omega(x) \underset{x\to-\infty}{\longrightarrow}\omega^\infty\subset\R^N
\end{cases}
\end{equation}
where $\omega^\infty$ is the bounded non-empty limit section. Hence the domain $\Omega$ tends to the limit cylinder $\Omega^\infty=\R\times\o^\infty$ as $x$ goes to 
$-\infty.$ We will give precise assumptions on this convergence later.
Throughout the paper, the non-linear term $f$ is assumed to be of bistable kind, \textit{i.e.} there exists $\theta\in(0,1)$ such that
\begin{subequations} \label{hypbistable}
\begin{gather}
 f\in C^3([0,1]),\ f(0)=f(\theta)=f(1)=0,\ f'(0),f'(1)<0  \label{hypbistable1}\\
 f(s)<0 \textrm{ for all }s\in(0,\theta),\qquad f(s)>0 \textrm{ for all }s\in(\theta,1). \label{hypbistable2}
\end{gather}
\end{subequations} 
Moreover, we assume that the invader is the state $u=1,$ that is
\begin{equation}\label{1plusstable}
 \int_0^1 f(s)ds>0.
\end{equation}
Let us recall (see \cite{FMcL,AW1D}) that for such a kind of non-linearity, there exists a unique two-uple $(c,\vp)\in\R\times C^5(\R)$ satisfying
\begin{equation}\label{bistableTW}
 \begin{cases}
  \vp''+c\vp'+f(\vp)=0. \\
  \vp(-\infty)=1,\ \vp(+\infty)=0,\ \vp(0)=\theta.
 \end{cases}
\end{equation}
The function $(t,x)\mapsto\vp(x-ct)$ is, up to translation, the unique planar travelling wave for reaction-diffusion equation of the form
\begin{equation}\label{bistable}
 \partial_t u-\Delta u= f(u),\ t\in\R,(x,y)\in\R^{N+1}.
\end{equation}

\paragraph{Motivations}
When the underlying domain has a spatial dependence,
travelling waves of the form (\ref{bistableTW}) no longer exist. However the notion of transition waves has been generalised 
in \cite{BHgeneralisedTW} by Berestycki and Hamel. The specific problem of generalised 
transition waves for a bistable equation in cylinders with varying cross section
has been treated by Berestycki, Bouhours and Chapuisat in \cite{BBC}. 
They considered the parabolic problem (\ref{Neumannpb})
in a domain satisfying (\ref{domaine1}).
However, they also made the following assumption on $\Omega:$
\begin{equation}\label{cylinder}
 \Omega \cap \left\{ (x,y)\in\R^{N+1},x<0\right\}=\R^-\times \o,\ \o\in\R^{N}.
\end{equation}
This assumption asserts that the domain is equal to a cylinder in the left half space. See fig. \ref{domaines} for examples.
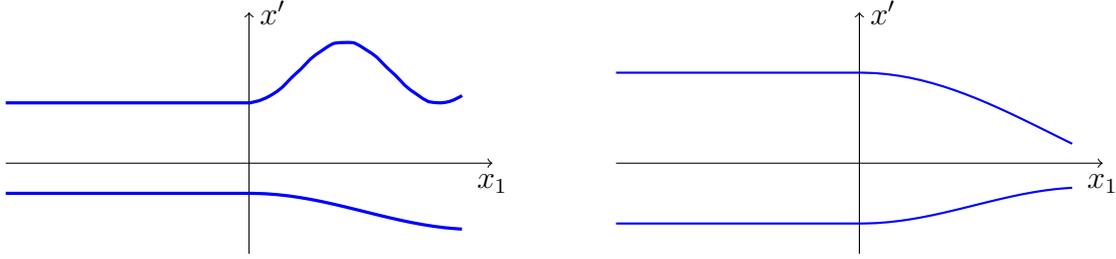
\begin{figure}[ht]
 \begin{tikzpicture}[scale=.8]
   \draw [->] (-4,0) -- (4,0);
   \draw [->] (0,-1.5) -- (0,2.5);
   \draw [blue,very thick] (-4,1) -- (0,1);
   \draw [blue,very thick] (-4,-0.5) -- (0,-0.5);
   \draw [blue,very thick,domain=0:3.5,samples=10,smooth,tension=1] plot (\x,{-0.5*cos(2*\x*180/pi)+1.5});
   \draw [blue,very thick,domain=0:3.5,samples=20,smooth,tension=1] plot (\x,{0.3*cos(\x*150/pi)-0.8});
   \draw (4,0) node[below] {$x_1$};
   \draw (0,2.5) node[right] {$x'$};
 \end{tikzpicture}
\hspace{1cm}
  \begin{tikzpicture}[scale=.8]
   \draw [->] (-4,0) -- (4,0);
   \draw [->] (0,-1.5) -- (0,2.5);
   \draw [blue, thick] (-4,1.5) -- (0,1.5);
   \draw [blue, thick] (-4,-1) -- (0,-1);
   \draw [blue, thick,domain=0:3.5,samples=20,smooth,tension=1] plot (\x,{cos(\x*90/pi)+0.5});
   \draw [blue, thick,domain=0:3.5,samples=20,smooth,tension=1] plot (\x,{-0.3*cos(\x*150/pi)-0.7});
   \draw (4,0) node[below] {$x_1$};
   \draw (0,2.5) node[right] {$x'$};
 \end{tikzpicture}
  \caption{\label{domaines}Two examples of domains $\Omega$ considered in \cite{BBC}}
\end{figure}
For such a kind of domains they obtained many properties concerning the propagation of a bistable wave, the first of them being
the existence and uniqueness of an entire solution: if $(c,\vp)$ is the unique solution of (\ref{bistableTW}), they showed that there exists a unique entire 
solution $u$ of (\ref{Neumannpb}) such that
 $$
 \lb u(t,x,y)-\vp(x-ct) \rb \underset{t\to-\infty}{\longrightarrow}0 \textrm{ uniformly in }\overline{\Omega}.
 $$
The assumption (\ref{cylinder}) was essential, but it is quite restrictive. It is therefore a relevant question to consider 
domains that \textbf{converge} to a cylinder as $x$ tends to $-\infty.$ This is the main topic of this paper.

\paragraph{The domain}
Throughout our study, we make the following assumptions on the domain.
We suppose that there exists a function 
\begin{equation}\label{hypdomain1}
 \Phi : \left\{ 
 \begin{array}{ccl}
  \Omega & \longrightarrow & \Omega^\infty \\
  (x,y) & \longmapsto & z=\Phi(x,y)
 \end{array}
\right.
\end{equation}
such that $\overrightarrow{\Phi} : (x,y) \mapsto \lp x,\Phi(x,y)\rp$ is a uniformly $C^3$ diffeomorphism between $\Omega$ 
and $\Omega^\infty$ with uniformly $C^3$ inverse. Moreover we suppose that $\Omega$ tends to $\Omega^\infty$ as $x\to-\infty$ in the sense 
that there exists $\k>0$ and some constant $C>0,$ for all $X\in\R,$
\begin{equation}\label{hypdomain2}
 \lV \overrightarrow{\Phi}-Id\rV_{C^2\lp \Omega\cap\{x<X\}\rp} \leq C\min\lp e^{\k X},1\rp.
\end{equation}
In particular, this assumption implies a uniform sliding sphere property for some radius $3r$ (see \cite{GT} for instance) on $\partial \Omega,$ which will be a key argument in our prove.
\begin{figure}[ht]
\begin{center}
      \begin{tikzpicture}[scale=0.7]
  \draw [->] (-4,0) -- (3,0);
  \draw [->] (0,-2.5) -- (0,2.5);
  \draw [red,dashed,thin] (-4,1.5) -- (0,1.5);
  \draw [red,dashed,thin] (-4,-1.5) -- (0,-1.5);
  \draw [blue,thick,samples=100,domain=-4:0] plot (\x,{1.5+1/(4+\x*\x)*cos(180*\x)});
  \draw [blue,thick,samples=100,domain=0:3] plot (\x,{2.75-cos(180*\x)+0.2*\x*sin(180*4*\x)});  
   \draw [blue,thick,samples=100,domain=-4:0] plot (\x,{-1.5+1/(2+\x*\x)});
  \draw [blue,thick,samples=100,domain=0:3] plot (\x,{-0.7-0.3*cos(100*\x)});  
     \draw (3,0) node[below] {$x$};
   \draw (0,2.5) node[left] {$y$};
  \end{tikzpicture}
  \end{center}
    \caption{\label{domainebis}An example of domain $\Omega$ considered in our paper}
\end{figure}
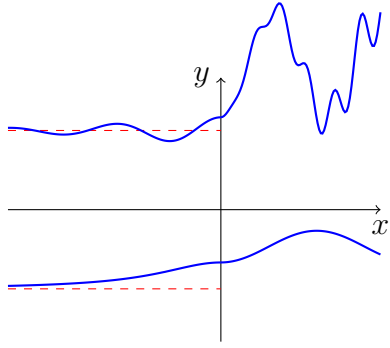
Contrary to what was done in \cite{BBC}, we consider a domain diffeomorphic to the straight cylinder. 
Hypothesis (\ref{hypdomain2}) asserts that the convergence to the straight cylinder is at some exponential rate and up to the second derivative.
However, it can be any exponential rate, hence it is a plausible guess that this assumption may be lightened.
An example of such a domain is given in figure \ref{domainebis}.

\paragraph{Results and organisation of the paper}
We prove the following existence theorem.

\begin{theorem}\label{thmgrosmodele}
 Under the assumptions (\ref{hypdomain1}) and (\ref{hypdomain2}) on the domain $\Omega$, there exists a function $u_\infty$ defined for $t\in\R,(x,y)\in\Omega$ solution 
 of (\ref{Neumannpb}) which satisfies
 \begin{equation}\label{convergencetowave}
 \sup \left\{ \lb u_\infty(t,x,y)-\vp(x-ct)\rb,\ (x,y)\in\Omega\right\}\underset{t\to-\infty}{\longrightarrow}0. 
 \end{equation}
 Moreover, the relation (\ref{convergencetowave}) uniquely determines $u_\infty.$
\end{theorem}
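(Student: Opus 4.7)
I would follow the classical Fife--McLeod construction of entire bistable solutions, adapted to the perturbed geometry. Using the diffeomorphism $\overrightarrow{\Phi}$ from \eqref{hypdomain1}, first rewrite \eqref{Neumannpb} on the straight cylinder $\Omega^\infty$ as $\partial_t v - \ML v = f(v)$ with Neumann boundary conditions on $\R\times\partial\omega^\infty$, where $\ML$ is a uniformly elliptic operator whose coefficients differ from those of $\Delta$ by terms of size $O(e^{\k x})$ up to second derivatives, as guaranteed by \eqref{hypdomain2}. Since $\int_0^1 f > 0$ yields $c > 0$, the transition layer of $\vp(x-ct)$ is located at $x\approx ct$, which tends to $-\infty$ as $t \to -\infty$ and so lies deep inside the cylindrical region where the perturbation of $\ML$ is exponentially small.

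\textbf{Sub- and super-solutions.} I would look for super- and sub-solutions of the form
\[
u^\pm(t,x,y) \;=\; \vp\!\left(x - ct \pm \xi(t)\right) \pm q(t),
\]
with $q(t), \xi(t) > 0$ tending to $0$ as $t \to -\infty$. Inserting into the transformed equation and splitting $\Omega^\infty$ into a transition region $\{|\vp - \theta| \le \delta_0\}$ and its complement (where $f'(\vp)$ is strictly negative), and using $c > 0$ and $\vp' < 0$, one reduces the required inequality to an ODE condition of the schematic form
\[
q'(t) + \mu\, q(t) \;\ge\; C\, e^{\k c t}, \qquad \xi'(t) \;\ge\; C'\, q(t),
\]
for some $\mu > 0$ controlled by $-f'(0), -f'(1)$. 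Choosing $q(t) = A\, e^{\sigma t}$ and $\xi(t) = B\, e^{\sigma t}$ with $0 < \sigma < \min(\mu, \k c)$ and $A, B$ sufficiently large then produces genuine sub- and super-solutions on $(-\infty, T]$ for some $T$ sufficiently negative.

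\textbf{Entire solution and uniqueness.} For each integer $n \ge -T$, let $u_n$ be the forward solution of \eqref{Neumannpb} on $(-n, \infty)$ with initial datum $u^-(-n, \cdot)$. The parabolic comparison principle yields $u^- \le u_n \le u^+$ on $(-n, T]$, and a translation argument shows that $\{u_n\}$ is monotone nondecreasing in $n$. Standard interior Schauder estimates then provide enough compactness to pass to a limit $u_\infty$, which is an entire solution; the squeeze $u^- \le u_\infty \le u^+$ together with $q(t), \xi(t) \to 0$ gives \eqref{convergencetowave}. For uniqueness, given another entire solution $\tilde u$ satisfying \eqref{convergencetowave}, for every $\e > 0$ one chooses $t_0 \ll 0$ so that $|\tilde u(t_0,\cdot) - \vp(\cdot - ct_0)| < \e$ uniformly on $\Omega$, and then compares $\tilde u$ against the same sub-/super-solution pair translated to start at $t_0$ with initial amplitude $\e$; letting $\e \to 0$ forces $\tilde u \equiv u_\infty$.

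\textbf{Main obstacle.} The delicate step is the estimation of the residual $(\ML - \Delta)\vp(x - ct + \xi)$ appearing in the sub-/super-solution inequality. This residual is a sum of terms of the form (perturbation coefficient of size $e^{\k x}$) $\cdot\, \vp^{(j)}(x-ct+\xi)$ for $j=1,2$. Inside the transition layer $x \approx ct$, $\vp'$ and $\vp''$ are $O(1)$ but the perturbation coefficient is $O(e^{\k c t})$; outside the layer, the perturbation is only $O(1)$ in the worst case, but $\vp', \vp''$ decay exponentially. Combining both regimes, one checks that the residual is bounded by $C\, e^{\k c t}$ uniformly in $y\in\omega^\infty$ --- which is the source term in the $q$-ODE above. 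This is exactly what \eqref{hypdomain2} with full $C^2$ control provides, and it is the reason the exponential-rate hypothesis on the convergence of $\Omega$ to $\Omega^\infty$ cannot be removed by this method, although the rate $\k$ itself is not used quantitatively.
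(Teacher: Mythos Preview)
Your approach is genuinely different from the paper's. The paper does \emph{not} proceed by global sub/super-solutions of Fife--McLeod type. Instead it proves a quantitative stability result (Proposition~\ref{grosprop}) for the forward Cauchy problem started at $\vp(x+M)$: after passing to the straight cylinder via $\overrightarrow{\Phi}$ and to the moving frame, it performs a Lyapunov--Schmidt decomposition $\tilde u=\vp(\xi+\chi(t,z))+v$ with $v\perp\vp'$, and closes estimates on $(\chi,v)$ by Duhamel's formula using the spectral gap of $\ML$ on $\MR(\ML)$. A single super-solution of the form $\e\psi(x,y)e^{-\a(x-ct)}$ (Lemma~\ref{lemtail}) is used only to control the tail $\{x>ct\}$, and the boundary-correcting factor $\psi$ is built from the distance-to-$\partial\Omega$ function precisely so that $\partial_\nu\bar u\geq0$. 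Existence then follows by the same compactness-in-$n$ scheme you describe; uniqueness is obtained from a lower bound on $\partial_t u_\infty$ together with results of \cite{BHM}, not from a second comparison argument.

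Your outline has a real gap at the boundary. After the change of variables the boundary condition on $\partial\Omega^\infty$ is \emph{not} Neumann: it is an oblique condition of the form $\partial_{\nu^\infty}\tilde u=R_2(\tilde u)$, where $R_2$ contains first derivatives of $\tilde u$ multiplied by the $O(e^{\k x})$ perturbation coefficients (see \eqref{reste2}). Equivalently, if you work directly on $\Omega$, the candidate $u^\pm=\vp(x-ct\pm\xi(t))\pm q(t)$ satisfies
\[
\partial_\nu u^\pm \;=\; \vp'\bigl(x-ct\pm\xi(t)\bigr)\,\nu_x(x,y),
\]
and $\nu_x$ has no sign --- it is only $O(e^{\k x})$. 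So neither $\partial_\nu u^+\geq0$ nor $\partial_\nu u^-\leq0$ holds pointwise, and the parabolic comparison principle for the Neumann problem does not apply. This boundary error is of the same order $O(e^{\k ct})$ as your interior residual, but unlike the interior term it cannot be absorbed by enlarging $q$ or $\xi$: comparison requires the boundary inequality itself. To salvage the method you would have to multiply by, or add, a boundary-correcting profile (in the spirit of the $\psi$ in Lemma~\ref{lemtail}), and check that this does not destroy the interior inequality; you do not address this. The paper circumvents the whole issue by treating the boundary residual $R_2$ as a forcing term in a linear parabolic problem and estimating it via Duhamel, which is why the spectral/semigroup machinery, rather than comparison, carries the main argument.
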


The proof of Theorem \ref{thmgrosmodele} amounts to proving stability of the travelling wave (\ref{bistableTW}) under the perturbation induced by the inhomogeneity of the domain.
Thus it is quite natural to first consider the following inhomogeneous problem in one dimension as a case study:
 \begin{equation}\label{toymodel}
  \partial_t u- \partial_{xx}u = f(u)\lp 1+g(x)\rp,\ t\in\R,x\in\R
 \end{equation}
 where $g$ is a bounded perturbation that satisfies the assumption
 \begin{equation}\label{hypothesesong}
 \textrm{there exists }\kappa>0,\  |g(x)|\leq e^{\kappa x} \textrm{ for all }x\in\R.
 \end{equation}
This model seems easier than the one considered previously. The non-homogeneous perturbation is only on the non-linearity and the problem is one dimensional.
Hypothesis (\ref{hypothesesong}) is closely related to our hypothesis (\ref{hypdomain2}); once again, we ask for an exponential rate, but it is arbitrary.
Existence of transition waves has already been studied for similar non-linearities (see \cite{zlatos15} for instance, and references therein).
However, as far as we know, the question of an entire solution that converges to the bistable wave as $t$ goes to $-\infty$ remained open.
It is the purpose of the next theorem.
 
 \begin{theorem}\label{solentieretoy}
 There exists a positive constant $\varpi$ which depends only on $f$ such that if $g$ satisfies (\ref{hypothesesong}) and 
 $g>-\varpi,$ then
 there exists a function $u_\infty=u_\infty(t,x)$ defined for $t\in\R,x\in\R$ solution of (\ref{toymodel}) which satisfies
 \begin{equation}
  \lV u_\infty(t,.)-\vp(.-ct)\rV_{L^\infty(\R)}\underset{t\to-\infty}{\longrightarrow}0.
 \end{equation}
 The constant $\varpi$ is given by $\varpi=\frac{\rho_1}{\lV f'\rV_\infty}$ where $\rho_1$ is the spectral gap 
 of the linearised operator associated with the travelling wave (\ref{bistableTW}).
\end{theorem}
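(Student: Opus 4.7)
The approach is a standard approximation/compactness scheme. I would construct $u_\infty$ as the locally uniform limit of solutions $u_n$ of \eqref{toymodel} on $[-n,+\infty)\times\R$ with Cauchy datum $u_n(-n,x)=\vp(x+cn)$, the bistable wave placed at its correct position at time $-n$. The comparison principle confines $u_n$ to $[0,1]$ and parabolic Schauder theory gives uniform local Hölder bounds, so everything reduces to establishing a uniform-in-$n$ stability estimate $\lV u_n(t,\cdot)-\vp(\cdot-ct+s_n(t))\rV_{L^\infty}\to 0$ as $t\to-\infty$ for some phase $s_n(t)\to 0$. Diagonal extraction then yields the entire solution $u_\infty$ and transfers the asymptotic convergence to \eqref{convergencetowave}.

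The stability estimate will come from a modulation argument in the travelling frame. Set $\xi=x-ct+s_n(t)$, write $u_n(t,x)=\vp(\xi)+v_n(t,\xi)$, and fix $s_n(t)$ by imposing the orthogonality condition $\langle v_n(t,\cdot),\vp'\rangle_c=0$ in the weighted inner product $\langle f,h\rangle_c := \int fh\,e^{c\xi}d\xi$, which renders $\ML := \partial_{\xi\xi}+c\partial_\xi+f'(\vp)$ self-adjoint with simple kernel $\R\vp'$ and spectral gap $\rho_1$. Using $\ML\vp'=0$ and \eqref{bistableTW}, the function $v_n$ satisfies
\begin{equation*}
\partial_t v_n \;=\; \ML v_n + g(x)f'(\vp)\,v_n - s_n'(t)\vp'(\xi) - s_n'(t)\partial_\xi v_n + (1+g(x))\,N(v_n) + g(x)f(\vp(\xi)),
\end{equation*}
with $N(v)=f(\vp+v)-f(\vp)-f'(\vp)v$; differentiating the orthogonality in time yields an ODE expressing $s_n'(t)$ in terms of $\lV v_n\rV_c$ and the source $gf(\vp)$.

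Pairing this equation with $v_n$ in $\langle\cdot,\cdot\rangle_c$ and exploiting the spectral gap produces
\begin{equation*}
\tfrac{1}{2}\tfrac{d}{dt}\lV v_n\rV_c^2 \;\le\; -\bigl(\rho_1 - \lV g\rV_\infty\lV f'\rV_\infty\bigr)\lV v_n\rV_c^2 + C\lV gf(\vp)\rV_c\,\lV v_n\rV_c + O(\lV v_n\rV_c^3).
\end{equation*}
The hypothesis $g>-\varpi = -\rho_1/\lV f'\rV_\infty$ is exactly what preserves a strictly positive dissipation rate, noting that on the support of $v_n$ one has $\xi=O(1)$ hence $x\sim ct$, so by \eqref{hypothesesong} the effective size of $g$ there is exponentially small and the pointwise lower bound $g>-\varpi$ controls any remaining contribution. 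The source obeys $\lV gf(\vp)\rV_c\lesssim e^{\kappa ct}$ for $t\to-\infty$ by the same localisation, so a Gronwall argument coupling the energy inequality with the modulation ODE yields $\lV v_n(t,\cdot)\rV_c+|s_n'(t)|\lesssim e^{\mu t}$ for some $\mu\in(0,\kappa c)$, uniformly in $n$ and $t\le T$, and $|s_n(t)|\lesssim e^{\mu t}$ follows by integration.

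The main technical obstacle is closing this coupled system: absorbing the lower-order contribution $s_n'\partial_\xi v_n$ and the quadratic remainder $N(v_n)$ without damaging the coercivity, and upgrading the weighted $L^2$ bound to the $L^\infty$ statement required by \eqref{convergencetowave} through parabolic regularity. The threshold $\varpi$ plays the exact role of keeping $\ML+gf'(\vp)$ coercive in every time slice, a spectral perturbation statement rendered robust by the fact that $|g|$ is effectively small on the support of the wave. Everything else, namely Cauchy theory, Schauder estimates, and the passage to the limit $n\to\infty$, is classical.
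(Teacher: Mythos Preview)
Your overall scheme---approximate by Cauchy problems $u_n$ started from the wave at its correct position, prove a uniform-in-$n$ stability estimate via a weighted energy in the moving frame with a Lyapunov--Schmidt/modulation decomposition, then extract by compactness---is exactly the paper's. The paper writes the decomposition as $\ut(t,\xi)=\vp(\xi+\chi(t))+v(t,\xi)$ with $\xi=x-ct$ fixed, which sidesteps your transport term $s_n'\partial_\xi v_n$, but this is cosmetic.

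The one genuine gap is your justification of coercivity. The displayed inequality with dissipation rate $\rho_1-\lV g\rV_\infty\lV f'\rV_\infty$ is useless as written: the hypothesis is only $g>-\varpi$, and $g$ may be arbitrarily large on $\{x>0\}$, so this quantity can be very negative. Your attempted repair, that ``on the support of $v_n$ one has $\xi=O(1)$'', is not true---$v_n$ has no reason to be localised---and the dangerous region is precisely $\xi\to+\infty$, where the weight $e^{c\xi}$ blows up and $x=\xi+ct$ may be positive even for $t\ll 0$. The correct argument, carried out in the paper, splits on the sign of $f'(\vp(\xi))$ rather than on any localisation of $v$: choose $X_1$ so that $f'(\vp(\xi))<0$ for all $\xi>X_1$ (possible since $f'(0)<0$). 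For $\xi>X_1$ one has $g(\xi+ct)f'(\vp(\xi))\le(-\inf g)\,\lV f'\rV_\infty<\rho_1$ directly from $g>-\varpi$, regardless of how large $g$ is where it is positive. For $\xi\le X_1$ one uses $|g(\xi+ct)|\le e^{\kappa(\xi+ct)}\le e^{\kappa(X_1+ct)}$, which is small uniformly once $t$ is sufficiently negative. The two regions together give the pointwise bound $g(\xi+ct)f'(\vp(\xi))\le\rho_1-\zeta$ for all $\xi$ and all $t\le -N_1$, and the coercivity $\langle -\ML v - gf'(\vp)v,\,v\rangle_c\ge\zeta\lV v\rV_c^2$ follows. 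With this fix, the rest of your outline (Gronwall for $\lV v\rV_c$, the modulation ODE for $s_n$, parabolic upgrade to $L^\infty$, diagonal extraction) goes through exactly as in the paper.
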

 In the last paragraph of this introduction we recall some well-known facts about bistable non-linearities. Namely,
 the spectral decomposition associated to the linearised operator will be the most important tool of our study.
 The next section is concerned with the proof of Theorem \ref{solentieretoy}.
 It is done using a perturbative argument. We study the Cauchy problem associated to (\ref{toymodel}) starting from a translated bistable wave
 and prove that the solution can stay arbitrary closed to the wave up to a certain time, and then we use a compactness argument.
 The stability result is obtained thanks to a Lyapunov-Schmidt decomposition.
 We project the equation onto the kernel and the range of the operator.
 Equation on the kernel involves only quadratic terms, and
 the linear perturbation in the range is treated with an energy method. In order to get coercivity in the equation we need the hypothesis
 concerning the lower bound on the perturbation. Then we use a bootstrap argument between these two equations.
 
 The last section is concerned with the proof of Theorem \ref{thmgrosmodele}. The proof follows the same steps, and differs in one major
 point: perturbative terms coming from the right are controlled with travelling super-solution that we construct in the beginning of this last section.
 Then, the arguments are similar.

\paragraph{Bibliographical remarks}
Existence of travelling waves for reaction-diffusion equations have been studied since the well known works of Kolmogorov, Petrovskii and Piskunov \cite{KPP}.
It has been generalised in the late seventies by Aronson and Weinberger \cite{AW} and Fife and McLeod \cite{FMcL} for the specific problem of
a one dimensional bistable wave.
Since these pioneering works, an important effort has been done to study front propagation and transition waves in inhomogeneous
reaction-diffusion equations.
For our purpose let us mention
the paper of Berestycki and Nirenberg \cite{BNirenberg} where they showed existence of travelling waves in cylinders with an inhomogeneous 
advection field. The notion of transition fronts, or invasion fronts, has been introduced in \cite{Matano_talk} and precisely defined and studied in \cite{BHgeneralisedTW}.
The specific problem of transition waves when heterogeneities come from the boundary have received a much more recent attention.
In \cite{CG}, the authors showed that for a bistable nonlinearity, if the domain $\Omega$ is a succession of two half rectangular cylinders,
one can find conditions on their width for the propagation to be blocked. The problem of transition fronts for exterior domains has been
treated in \cite{BHM} where the authors devised geometrical conditions for the invasion to be complete. Finally, as already said, cylinder with varying 
cross sections has just been studied in \cite{BBC}.

Entire solutions for inhomogeneous one dimensional reaction-diffusion equations like (\ref{toymodel}) has been much studied in the few last years.
Existence for an ignition nonlinearity has been devised independently in \cite{MRS10} and \cite{NR09}. A generalisation for time and space inhomogeneous
reaction-diffusion equations of both ignition and bistable kind has recently been given in \cite{zlatos15}. However,
no asymptotic behaviour is given.

\paragraph{Acknowledgements} The research leading to these results has received 
funding from the European Research Council under the European Union’s 
Seventh Framework Programme (FP/2007-2013) / ERC Grant Agreement n.321186 - ReaDi -Reaction-Diffusion Equations, Propagation and Modelling.
This work was also partially supported by the French National
Research Agency (ANR), within the project NONLOCAL ANR-14-CE25-
0013.
I am grateful to Henri Berestycki and Jean-Michel Roquejoffre for suggesting me the
model and many fruitful conversations.


\paragraph{Some preliminary material}
\subparagraph{Behaviour at infinity}
As is shown in \cite{FMcL,GuoMorita}, it is easily seen that there exist two constants $C_1,C_2$ such that $\vp$ and $\vp'$ satisfy
\begin{equation}\label{estimeesbistable}
 \begin{cases}
C_1e^{\mu \xi}\leq 1-\vp(\xi) \leq C_2  e^{\mu \xi} & \xi\leq 0 \\
C_1e^{\la \xi}\leq \vp(\xi) \leq C_2  e^{\la \xi}  & \xi > 0  \\
C_1e^{\mu \xi}\leq -\vp'(\xi) \leq C_2  e^{\mu \xi} & \xi\leq 0 \\
C_1e^{\la \xi}\leq -\vp'(\xi) \leq C_2  e^{\la \xi} & \xi > 0  
 \end{cases}
\end{equation}
where
\begin{equation}\label{deflambdamu}
 \mu=\frac{-c+\sqrt{c^2-4f'(1)}}{2},\ \la=\frac{-c-\sqrt{c^2-4f'(0)}}{2}.
\end{equation}
Without lack of generality, we suppose that the exponential rate $\k$ given in (\ref{hypdomain2}) satisfies
\begin{equation}\label{hypdomain4}
 0<\kappa<-\la-\frac{c}{2}.
\end{equation}
This is just a technical assumption which is not restrictive.

\subparagraph{Spectral decomposition}
Throughout this paper, we will make a large use of a classical Lyapunov-Schmidt decomposition. We recall here some well-known facts of the involved spectral theory.
We consider the Banach space of bounded uniformly continuous functions that tend to 0 at infinity
$
\mathfrak{X}=BUC_0(\R).
$
 As we are looking for a stability result, 
 the linearised operator that will naturally appear in the moving framework $\xi=x-ct$ is given by
 \begin{equation*}
  \ML : \left\{ \begin{array}{rcl}
                 \MD(\ML)\subset \mathfrak{X} & \longrightarrow & \mathfrak{X} \\
                 v & \longmapsto & c\partial_\xi v+\partial_{\xi\xi}v+f'(\vp)v.
                \end{array}
                \right.
 \end{equation*}
It is a common result (see \cite{Sattinger} or \cite{Roquejoffre92} for instance) that there exists $\mathfrak{X}_1\simeq \MR(\ML)$
a closed subspace of $X$ such that
\begin{equation}\label{decompositionimagenoyau}
 \mathfrak{X}=\mathfrak{X}_1\oplus \MN(\ML).
\end{equation}
The null space of $\ML$ satisfies $\MN(\ML)=\MN(\ML^2)=\vp'\R.$ As we consider a bistable non-linearity, 0 is the first and an isolated eigenvalue
in the spectrum of $\ML.$ We denote $\rho_1$ the spectral gap between 0 and the second eigenvalue.
The projection on $\MN(\ML)$ is 
given by
\begin{equation}\label{definitioneetoile}
P\psi(\xi)=\langle e_*,\psi\rangle\vp'(\xi)=\frac{1}{\Lambda}\lp\int_\R e^{cz}\vp'(z)\psi(z)dz\rp\vp'(\xi)
\end{equation}
with the normalisation $\Lambda=\int_\R e^{cx}\vp'^2(x)dx.$ The projection on $\MR(\ML)$ is then given by
$$
Q\psi=\psi-P\psi.
$$
The operator $\ML_{|\mathfrak{X}_1}$ generates an analytic semi-group on $\mathfrak{X}_1$ endowed with the $L^\infty$ norm that
satisfies for all $t\geq0$
\begin{equation}\label{decML}
 \lV e^{t\ML}\rV  \leq C e^{-\rho t}
\end{equation}
where $\rho$ is any positive constant smaller than the spectral gap $\rho_1$ of $\ML$ and $C$ is a positive constant.

\section{The one-dimensional model: proof of Theorem \ref{solentieretoy}}
Our study deals with the following parabolic problem indexed by $M$:
\begin{equation}\label{Cauchytoymodel}
 \begin{cases}
\partial_t u(t,x) -\partial_{xx} u(t,x) = f(u)\lp 1+r(x)\rp \qquad t>0,x\in\R \\
u(0,x)=\vp(x) \\
r(x)=g(x-M).
 \end{cases}
\end{equation}
Therefore, the perturbation term satisfies
\begin{equation}
 \label{hypothesesonr}
 \lV r(x) \rV\leq e^{\kappa\lp x-M\rp},\qquad r>-\varpi.
\end{equation}
We have the following stability result.
\begin{prop}\label{toyprop} 
 Let $u$ be the solution of the Cauchy problem (\ref{Cauchytoymodel}). Under the assumptions (\ref{hypbistable}) and (\ref{hypothesesonr}) on $f$ and $r,$ 
 there exists $\gamma>0,$ there exist $M_0>0,K>0,N_0>0$ such that, for all $M\geq M_0$, for all $\displaystyle t\in\lp 0,\frac{M}{c}-N_0\rp,$ for all $x\in\R,$
 $$
 \lb u(t,x)-\vp(x-ct)\rb \leq K e^{\gamma \lp ct-M\rp}.
 $$
\end{prop}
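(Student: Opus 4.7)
The first step is to pass to the wave coordinate $\xi=x-ct$ and set $v(t,\xi):=u(t,\xi+ct)-\vp(\xi)$, so that $v(0,\cdot)\equiv 0$. Using $\vp''+c\vp'+f(\vp)=0$, one finds
\begin{equation*}
\partial_t v = \ML v+N(v)+f(v+\vp)\,r(\xi+ct),
\end{equation*}
where $N(v):=f(v+\vp)-f(\vp)-f'(\vp)v$ is quadratic in $v$. Splitting $f(v+\vp)r=f(\vp)r+f'(\vp)r\,v+N(v)r$ isolates a $v$-independent forcing $f(\vp)r$, a small linear-in-$v$ perturbation $f'(\vp)r$ of the operator $\ML$, and a genuinely higher-order piece.

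\textbf{Lyapunov--Schmidt decomposition.} Using the spectral decomposition (\ref{decompositionimagenoyau}), I would write $v(t,\xi)=\a(t)\vp'(\xi)+w(t,\xi)$ with $w(t,\cdot)\in\mathfrak{X}_1$, and apply the projectors $P$ and $Q$ to obtain
\begin{equation*}
\a'(t)=\langle e_*,\,N(v)+f(v+\vp)r(\cdot+ct)\rangle,\qquad \partial_t w=\ML w+Q\!\left[N(v)+f(v+\vp)r(\cdot+ct)\right].
\end{equation*}
Thanks to (\ref{hypdomain4}) and (\ref{estimeesbistable}) the weight $e^{c\xi}\vp'(\xi)$ appearing in $e_*$ decays exponentially at both infinities at a rate strictly larger than $\k$, so that both the scalar product $\langle e_*, f(\vp)r(\cdot+ct)\rangle$ and the range forcing $Q[f(\vp)r(\cdot+ct)]$ are of order $e^{\k(ct-M)}$, which fixes the expected decay rate.

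\textbf{Energy estimate and bootstrap.} For the range component I would perform a weighted $L^2$ energy estimate with weight $e^{c\xi}$, with respect to which $\ML$ is symmetric. Integration by parts yields
\begin{equation*}
\int e^{c\xi}w\,\ML w\,d\xi=-\int e^{c\xi}(w')^2\,d\xi+\int e^{c\xi}f'(\vp)w^2\,d\xi\leq -\rho_1\int e^{c\xi}w^2\,d\xi
\end{equation*}
by the spectral gap, since $w\in\mathfrak{X}_1$. The linear perturbation $f'(\vp)r\,w$ produces an extra $\int e^{c\xi}f'(\vp)\,r\,w^2\,d\xi$, and the hypothesis $r>-\varpi=-\rho_1/\lV f'\rV_\infty$ is exactly what is needed to keep the perturbed quadratic form strictly coercive on $\mathfrak{X}_1$. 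Combined with the exponentially small forcing, Gronwall yields $\lp\int e^{c\xi}w^2\rp^{1/2}\lesssim e^{\gamma(ct-M)}$ for some $0<\gamma<\k$. On the kernel, the scalar ODE for $\a$ has right-hand side dominated by $Ce^{\k(ct-M)}$ plus terms quadratic in $(\a,w)$, hence $|\a(t)|\lesssim e^{\gamma(ct-M)}$ as well. A continuity/bootstrap argument starting from $v(0,\cdot)=0$ then propagates these estimates up to $t=M/c-N_0$ for $M\geq M_0$ large, and translates back into the pointwise bound claimed in the proposition after controlling $\lV v\rV_{L^\infty}$ by the joint $(\a,w)$ bounds and parabolic regularity.

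\textbf{Main difficulty.} The delicate step is the coercivity of the perturbed operator $\ML+f'(\vp)\,r$ on $\mathfrak{X}_1$: the correction $f'(\vp)\,r$ directly competes with the spectral gap $\rho_1$, and the sign of $f'(\vp)$ changes on $\R$. In the region $\xi\lesssim M-ct$ the estimate $|r(\xi+ct)|\leq e^{\k(\xi+ct-M)}$ makes $r$ uniformly small and the perturbation harmless; outside that region it is only the pointwise lower bound $r>-\varpi$ that saves the coercivity, which is precisely why the explicit threshold $\varpi=\rho_1/\lV f'\rV_\infty$ shows up in the statement. Propagating this coercivity alongside the bootstrap coupling $\a$ and $w$ through quadratic interactions is the technical heart of the argument.
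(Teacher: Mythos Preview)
Your overall strategy matches the paper's: moving frame, Lyapunov--Schmidt splitting, weighted $L^2$ energy estimate exploiting the spectral gap against the lower bound $r>-\varpi$, kernel ODE, bootstrap. Two points deserve comment.

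First, a cosmetic difference: the paper uses the \emph{nonlinear} ansatz $\tilde u(t,\xi)=\vp(\xi+\chi(t))+v(t,\xi)$ with $\langle e_*,v(t,\cdot)\rangle=0$, obtained via the implicit function theorem (Lemma \ref{lemnoyauimage}), rather than your linear splitting $v=\a\vp'+w$. In the perturbative regime both lead to the same conclusion, but the nonlinear choice packages the quadratic remainders more cleanly (the paper's $R$ in (\ref{reste})) and is the standard device when translation invariance is involved.

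Second, and this is a genuine gap in your sketch: the energy estimate only yields $\lp\int e^{c\xi}w^2\rp^{1/2}\lesssim e^{\gamma(ct-M)}$, whereas the proposition requires the \emph{unweighted} sup norm of $u-\vp$. Since the weight $e^{c\xi}$ vanishes as $\xi\to-\infty$, neither the weighted $L^2$ bound nor parabolic regularity on the symmetrised variable $e^{c\xi/2}w$ controls $\lV w\rV_{L^\infty(\R)}$ on the far left. Invoking ``parabolic regularity'' here is not enough. The paper closes this in two separate steps: first, parabolic estimates on the equation for $e^{c\xi/2}v$ upgrade the weighted $L^2$ bound to $\lV e^{c\xi/2}v\rV_\infty\lesssim e^{2\a(ct-M)}$; second --- and this is the missing ingredient --- it runs the parabolic maximum principle for $v$ itself on the half-line $(-\infty,-A(T)]$ with $A(T)=\sigma(M-cT)$, using that $-f'(\vp)(1+r)\geq -\tfrac12 f'(1)>0$ there (because $\vp\to 1$) together with the already-established smallness of the boundary value $v(t,-A)$ and of the source. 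Only after this step does one obtain the unweighted $L^\infty$ bound, at the price of a strictly smaller exponent $\gamma$. Without it the bootstrap does not close.
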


\subsection{Proof of Theorem \ref{solentieretoy} with Proposition \ref{toyprop}}

The proof of Theorem \ref{solentieretoy} is quite classical, see \cite{BHM} for instance. For all integer $n,$ let $u_n,$ defined
for $t\geq -n$ and $x\in\R,$ be the solution of the following parabolic Cauchy problem:
\begin{equation}
 \begin{cases}
\partial_t u_n(t,x) -\partial_{xx} u_n(t,x) = f(u_n)\lp 1+g(x)\rp \qquad t>-n,x\in\R \\
u_n(-n,x) = \vp(x+cn).
 \end{cases}
\end{equation}
From classical parabolic estimates, $\lp u_n\rp_n$ converges up to an extraction locally uniformly to some function $u_\infty$ defined for $t\in\R,x\in\R.$
Then we apply Theorem \ref{toyprop}: 
\begin{equation}
 \forall n>\frac{M_0}{c},\ \forall t<-N_0,\ \lV u_n(t,.)-\vp(.-ct)\rV_{L^\infty(\R)}\leq K e^{\gamma ct},
\end{equation}
and the proof of Theorem \ref{solentieretoy} is concluded by letting $n$ tend to $+\infty.$

\subsection{Proof of Proposition \ref{toyprop}: splitting of the problem}\label{splittingpetit}
It is natural to consider the equation 
in the moving framework. Hence we make the following change of variable
\begin{equation*}
\xi =  x-ct, \qquad \ut(t,\xi)=u(t,x)=\ut(t,x-ct).
\end{equation*}
The problem under study becomes
\begin{equation}\label{CauchytoymodelTW}
 \begin{cases}
\partial_t \ut(t,\xi) -c\partial_\xi \ut(t,\xi) -\partial_{\xi\xi} \ut(t,\xi) = f(\ut)\lp 1+r(\xi+ct)\rp \qquad t>0,\xi\in\R \\
\ut(0,\xi)=\vp(\xi).
 \end{cases}
\end{equation}
Using the decomposition (\ref{decompositionimagenoyau}) we have the next lemma.

\begin{lem}\label{lemnoyauimage}
Let $u$ be the solution of the Cauchy problem (\ref{CauchytoymodelTW}). There exists $\e_1>0$ such that if
$$T_{max}=\sup\{ T\geq0,\ \forall t\in[0,T],\ \lV \ut(t)-\vp\rV_\infty\leq \e_1\},$$ 
there exist two functions $\chi\in C\lp[0,T_{max})\rp$ and 
$v\in C\lp[0,T_{max}),X\rp$ such that, for all $t\in (0,T_{max}),$
\begin{equation}\label{decompositionhenry}
\ut(t,\xi)=\vp(\xi+\chi(t))+v(t,\xi) 
\end{equation}
where $v$ satisfies, for all $t\in[0,T_{max}),$
\begin{equation*}
 \langle e_*,v(t)\rangle=0.
\end{equation*}
\end{lem}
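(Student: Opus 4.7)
The plan is to realize the decomposition as the projection of $\tilde{u}(t)$ onto a tubular neighborhood of the curve $\{\varphi(\cdot+\chi):\chi\in\R\}$ by the implicit function theorem, using the normalization (\ref{definitioneetoile}) to obtain nondegeneracy.

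First I would introduce the scalar map
\[
G:\R\times L^\infty(\R)\longrightarrow \R,\qquad G(\chi,w)=\langle e_*,\ w-\varphi(\cdot+\chi)\rangle=\frac{1}{\Lambda}\int_\R e^{c\xi}\varphi'(\xi)\bigl(w(\xi)-\varphi(\xi+\chi)\bigr)\,d\xi.
\]
The weight $e^{c\xi}\varphi'(\xi)$ lies in $L^1(\R)$ by the exponential bounds (\ref{estimeesbistable}) and the definition (\ref{deflambdamu}) of $\mu,\lambda$, so $G$ is well-defined and linear (hence $C^\infty$) in $w$, and smooth in $\chi$ by dominated convergence. One computes $G(0,\varphi)=0$, and
\[
\partial_\chi G(0,\varphi)=-\langle e_*,\varphi'\rangle=-1,
\]
the equality $\langle e_*,\varphi'\rangle=1$ being precisely the content of the normalization $\Lambda=\int_\R e^{c\xi}\varphi'^2(\xi)\,d\xi$ that makes $P$ a projection onto $\mathcal{N}(\mathcal{L})=\R\varphi'$.

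Next I would invoke the implicit function theorem in Banach spaces: since $\partial_\chi G(0,\varphi)\neq 0$ and $G$ is $C^1$ jointly in $(\chi,w)$, there exist $\varepsilon_1>0$ and a $C^1$ map $\chi:B_{L^\infty}(\varphi,\varepsilon_1)\to\R$ with $\chi(\varphi)=0$ and $G(\chi(w),w)=0$ for every $w$ in this ball. Setting $T_{\max}$ as in the statement (which is positive since $\tilde{u}(0)=\varphi$ and $\tilde{u}$ is continuous with values in $L^\infty$ by parabolic regularity applied to (\ref{CauchytoymodelTW})), I define
\[
\chi(t):=\chi\bigl(\tilde{u}(t,\cdot)\bigr),\qquad v(t,\xi):=\tilde{u}(t,\xi)-\varphi(\xi+\chi(t)),\qquad t\in[0,T_{\max}).
\]
Continuity of $t\mapsto\chi(t)$ follows from the composition of the continuous maps $t\mapsto\tilde{u}(t,\cdot)\in L^\infty$ and $w\mapsto\chi(w)$; by construction $\langle e_*,v(t)\rangle=0$.

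The last thing to verify is that $v(t,\cdot)\in\mathfrak{X}=BUC_0(\R)$, which is the only slightly delicate point. Both $\tilde{u}(t,\cdot)$ and $\varphi(\cdot+\chi(t))$ are in $BUC(\R)$ by parabolic regularity, and both share the limits $1$ at $-\infty$ and $0$ at $+\infty$: for $\varphi(\cdot+\chi(t))$ this is immediate from (\ref{bistableTW}), and for $\tilde{u}$ it follows from a comparison argument using the bistable structure, since the initial datum $\varphi$ has these limits and the perturbed equation (\ref{CauchytoymodelTW}) admits $0$ and $1$ as constant solutions (because $f(0)=f(1)=0$). Hence $v(t,\cdot)\to 0$ at $\pm\infty$, uniformly on $[0,T_{\max})$ by the quantitative decay (\ref{estimeesbistable}), and thus $v(t,\cdot)\in\mathfrak{X}$, completing the construction. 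The main subtlety throughout is ensuring that the $L^\infty$-framework is compatible with the weighted pairing $\langle e_*,\cdot\rangle$; this is guaranteed once and for all by the integrability of $e^{c\xi}\varphi'(\xi)$ at both infinities.
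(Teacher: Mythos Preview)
Your argument is correct and is precisely the implicit function theorem construction the paper has in mind when it writes ``The proof is a consequence of the implicit function theorem. See \cite{henry} for instance for a guideline of the proof.'' You have simply unpacked what the paper leaves as a reference, with the same choice of functional $G(\chi,w)=\langle e_*,w-\varphi(\cdot+\chi)\rangle$ and the same nondegeneracy coming from the normalization $\Lambda$.
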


The proof is a consequence of the implicit function theorem. See \cite{henry} for instance for a guideline of the proof.
Such a decomposition is common when dealing with stability of travelling waves and comes from the invariance under translation in space
of (\ref{bistable}).

Inserting the ansatz (\ref{decompositionhenry}) in our problem (\ref{CauchytoymodelTW}) yields the following equation:
\begin{equation}\label{decomposition1}
 \chi'\vp'(\xi+\chi)+v_t-c\vp'(\xi+\chi)-cv_\xi-\vp''(\xi+\chi)-v_{\xi\xi}=f\lp\vp(\xi+\chi)+v\rp\lp1+r(\xi+ct)\rp
\end{equation}
We make a Taylor expansion for the terms $\chi'\vp'(\xi+\chi)$ and $f\lp\vp(\xi+\chi)+v\rp.$ From now, for the sake of 
simplicity and when there is no possible confusion, we will sometimes omit te variables. 
In particular, we will use the
following notations:
\begin{equation}\label{notationsxiRr}
  \vp:=\vp(\xi) \qquad \vp_\chi:=\vp(\xi+\chi(t)) \qquad
 r:=r(\xi+ct). 
\end{equation}
Equation (\ref{decomposition1}) becomes
\begin{equation*}
 \chi'\vp'+v_t-cv_\xi-v_{\xi\xi}-f'(\vp)v-r(\xi+ct)f'(\vp)v-r(\xi+ct)f(\vp_\chi)=R(t,\xi,\chi,v)
\end{equation*}
where the right term is given by
\begin{equation}\label{prereste}
 R(t,\xi,\chi,v)= v\chi\vp'(b_1)f''(b_3)-\chi\chi'\vp''(b_2)+v^2f''(b_4)(1+r)
\end{equation}
with 
\begin{equation*}
 b_1,b_2 \;\in \lp \xi-|\chi|,\xi+|\chi|\rp, \qquad b_3,b_4\in \lp -\lV\vp\rV_\infty-|v|,\lV\vp\rV_\infty+|v|\rp.
\end{equation*}
We write this term in a more convenient form. As soon as $\chi<1,$ there exist $\Phi_1,\Phi_2,\Phi_3$ uniformly bounded functions of $(t,\xi)$ 
with bounds depending only on $\lV f\rV_{C^2},$ $\lV\vp\rV_{C^2},$ and $\lV r\rV_\infty$ such that 
\begin{equation}\label{reste}
 R(t,\xi,\chi,v)=v\chi\vp'\Phi_1+\chi\chi'\lp \vp'+f(\vp)\rp\Phi_2+v^2\Phi_3.
\end{equation}
Hence, using the decomposition (\ref{decompositionimagenoyau}), the problem (\ref{CauchytoymodelTW}) is equivalent to the following 
system on $[0,T_{max}):$
\begin{equation}\label{systemedecompose}
\begin{cases}
 \chi'-\langle e_*,r(.+ct)\lp f'(\vp)v+f(\vp_\chi)\rp\rangle = \langle e_*,R(t,.,\chi,v)\rangle \qquad t\in[0,T_{max})\\
 v_t-\ML v-Q\lc r(.+ct)\lp f'(\vp)v+f(\vp_\chi)\rp \rc = 
 Q\lc R(.,\chi,v)\rc \quad t\in[0,T_{max}),\xi\in\R \\
 \chi(0)=0,\ v(0,\xi)=0.
\end{cases}
\end{equation}

For all $\a>0,$ for all $M>0,$ let us define $T(\a,M)$ by
\begin{equation}\label{talphabeta}
T(\a,M):=\sup\{ T>0,\ \forall t<T,\ \max\{|\chi(t)|,|\chi'(t)|,\lV v(t)\rV_\infty\}\leq e^{\a\lp ct- M\rp}\}. 
\end{equation}

\subsection{Equation on $\MR(\ML)$}
\begin{lem}\label{lemmeimage}
 Let $\chi,v$ be solution of (\ref{systemedecompose}). There exist $\a,\gamma>0$ with $\gamma>\a,$ there exist $M_1>0$ and $N_1>0$ and a constant
 $C_3>0$ 
 such that for all $M>M_1,$ for all $\displaystyle t<\inf \{T(\a,M),\frac{M}{c}-N_1,T_{max}\},$ 
 $$
 \lV v(t)\rV_{L^\infty(\R)}\leq C_1 e^{\gamma\lp ct- M\rp}.
 $$
\end{lem}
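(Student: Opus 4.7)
The plan is to perform a weighted energy estimate on the $v$-equation of (\ref{systemedecompose}). Let $\langle u,v\rangle_w:=\int_\R e^{c\xi}\,u(\xi)\,v(\xi)\,d\xi$ be the inner product with respect to which $\ML$ is self-adjoint and, by the normalisation in (\ref{definitioneetoile}), $P$ and $Q$ become orthogonal projections. On $\mathfrak{X}_1=\MR(\ML)$ the spectral gap yields the coercivity $\langle \ML v,v\rangle_w \le -\rho_1\lV v\rV_w^2$ for $v=Qv$, where $\lV \cdot \rV_w:=\sqrt{\langle\cdot,\cdot\rangle_w}$.

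Pairing the second line of (\ref{systemedecompose}) with $e^{c\xi} v$ and using $\langle Qu,v\rangle_w=\langle u,v\rangle_w$, I obtain
\[ \frac{1}{2}\frac{d}{dt}\lV v\rV_w^2 = \langle \ML v,v\rangle_w + \int_\R e^{c\xi}\,r(\xi+ct)\,f'(\vp)\,v^2\,d\xi + \int_\R e^{c\xi}\,r(\xi+ct)\,f(\vp_\chi)\,v\,d\xi + \int_\R e^{c\xi}\,R\,v\,d\xi. \]
The first term gives $\le -\rho_1\lV v\rV_w^2$. The second, the linear perturbation, is the delicate one: splitting $r=r_+-r_-$ with $0\le r_-<\varpi$, the negative part contributes at least $-\varpi\,\lV f'\rV_\infty \lV v\rV_w^2 = -\rho_1\lV v\rV_w^2$, so the strict inequality $g>-\varpi$ leaves a residual margin $2\delta>0$ of coercivity; the positive part satisfies $r_+(\xi+ct)\le e^{\k(\xi+ct-M)}$, which is exponentially small in the region where the weighted integral concentrates, and contributes only a remainder of order $e^{\k(ct-M)}\lV v\rV_w$.

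For the source terms, the bistable asymptotics (\ref{estimeesbistable}) give $|f(\vp_\chi)|\le C\,|\vp'(\xi+\chi)|$ with exponential decay at $\pm\infty$; combined with $|r(\xi+ct)|\le e^{\k(\xi+ct-M)}$ and the condition $\k<-\la-c/2$ from (\ref{hypdomain4}), a Cauchy-Schwarz estimate produces
\[ \left| \int_\R e^{c\xi}\,r(\xi+ct)\,f(\vp_\chi)\,v\,d\xi \right| \le C\,e^{\k(ct-M)}\lV v\rV_w. \]
The remainder $R$ given by (\ref{reste}) is quadratic in $(\chi,\chi',v)$, and the localisation factors $\vp'$ and $f(\vp)$ there ensure all weighted integrals converge, so under the bootstrap assumption $t<T(\a,M)$ one obtains $|\langle R,v\rangle_w|\le C\,e^{2\a(ct-M)}\lV v\rV_w$.

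Assembling these pieces yields the Gronwall-type inequality
\[ \frac{d}{dt}\lV v\rV_w^2 \le -2\delta\,\lV v\rV_w^2 + C\lp e^{\k(ct-M)}+e^{2\a(ct-M)}\rp \lV v\rV_w, \]
and since $v(0)=0$, integration gives $\lV v(t)\rV_w \le C\,e^{\gamma(ct-M)}$ for any $\gamma\in(\a,\min(\k,2\a))$, provided $\a$ is small enough (namely $\a<\k$) and $M\ge M_1$. The passage from this weighted $L^2$ bound to the pointwise bound required by the statement is achieved by standard parabolic regularity: the $v$-equation has uniformly bounded coefficients, so Schauder-type estimates on a unit time-slice $[t-1,t]$ localised with a cutoff upgrade the $L^2_w$ decay to an $L^\infty$ decay at the same exponential rate, absorbing the multiplicative constants into $C_1$.

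The main obstacle is the borderline character of the linear perturbation: $\varpi=\rho_1/\lV f'\rV_\infty$ is calibrated exactly so that the negative part of $r$ saturates the spectral gap, so the residual coercivity $\delta$ has to be extracted from the combination of the strict inequality $g>-\varpi$ and the exponential smallness of $r_+$ in the relevant region. This delicate calibration is what ties together the admissible rates $\a$, $\gamma$, and the decay exponent $\k$ of (\ref{hypothesesong}).
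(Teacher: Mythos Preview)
Your energy estimate is essentially the paper's argument in disguise: writing $w=e^{c\xi/2}v$ turns your weighted inner product $\langle\cdot,\cdot\rangle_w$ into the standard $L^2$ pairing for $w$, and the coercivity, the handling of the linear perturbation $rf'(\vp)v$ via the lower bound $r>-\varpi$ together with the smallness of $r_+$ for $t<M/c-N_1$, and the quadratic estimates on $R$ all match the paper's computations. (One small slip: the $r_+$ contribution to $\int e^{c\xi}r f'(\vp)v^2$ is of order $e^{\k(ct-M)}\lV v\rV_w^{2}$, not $e^{\k(ct-M)}\lV v\rV_w$; but this is still absorbable into the coercive term once $N_1$ is chosen.)

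The genuine gap is the last paragraph. You cannot pass from the weighted bound $\lV v\rV_w\le Ce^{\gamma(ct-M)}$ to an unweighted $L^\infty$ bound by ``standard parabolic regularity localised with a cutoff'': on a unit interval around $\xi_0\ll 0$ the unweighted local $L^2$ norm of $v$ is only controlled by $e^{-c\xi_0/2}\lV v\rV_w$, which blows up as $\xi_0\to-\infty$. The weight $e^{c\xi}$ carries no information about $v$ on the far left. The paper closes this in two steps that you are missing. First, parabolic estimates applied to the equation satisfied by $w$ (not $v$) upgrade $\lV w\rV_{L^2}$ to $\lV w\rV_{L^\infty}\le Ce^{2\a(ct-M)}$; this controls $v$ only on a right half-line $\{\xi\ge -A(T)\}$ with $A(T)=\sigma(M-cT)$, at the cost of a factor $e^{cA/2}$. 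Second, on the complementary half-line $(-\infty,-A(T)]$ one uses the parabolic maximum principle directly on $v$: there $f'(\vp)\to f'(1)<0$, so the zeroth-order coefficient has the right sign, the boundary value at $\xi=-A(T)$ is controlled by the previous step, and the source terms decay thanks to the factors $\vp'$, $f(\vp_\chi)$, $e^{\mu\xi}$. Balancing the loss $e^{cA/2}$ against the gain $e^{2\a(cT-M)}$ via the choice of $\sigma$ produces the final rate $\gamma>\a$. This half-line maximum-principle step is the missing idea; without it the proof does not yield a uniform $L^\infty$ bound on $\R$.
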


\paragraph{Proof of Lemma \ref{lemmeimage}}
The equation under study is the following:
\begin{equation}\label{equationimage}
\begin{cases}
 v_t-\ML v-\lp rf'(\vp)v-\langle e_*,rf'(\vp)v\rangle\vp' \rp = Q\lc R+rf(\vp_\chi)\rc \\
 v(0,\xi)=0.
 \end{cases}
\end{equation}

\subparagraph{Symmetrisation of the problem} Let $v$ be the solution of (\ref{equationimage}) on $[0,T_{max}).$ We define
for $t\in[0,T_{max})$ and $\xi\in\R$
\begin{equation}\label{defw}
 w(t,\xi):=e^{\frac{c}{2}\xi}v(t,\xi).
\end{equation}
%
%
The function $w$ satisfies the following equation:
\begin{equation}\label{equationsurw}
 w_t-w_{\xi\xi}+\lp \frac{c^2}{4}-f'(\vp)\rp w-rf'(\vp)w = 
 -\langle e_*,rf'(\vp)v\rangle\vp'e^{\frac{c}{2}\xi}+e^{\frac{c}{2}\xi}Q\lc R+rf(\vp_\chi)\rc.
\end{equation}

\subparagraph{Energy estimates on $w$} Multiply (\ref{equationsurw}) by $w$ and integrate by parts ; as $v$ is orthogonal with $\vp',$ 
it comes $\int_\R e^{c\xi}\vp'(\xi)v(t,\xi)d\xi=0$ for all $t<T_{max}.$ We get:
\begin{equation}\label{equationintegree}
 \frac{1}{2}\frac{d}{dt}\int_\R w^2+\int_\R w_\xi^2+\int_\R\lp\frac{c^2}{4}-f'(\vp)\rp w^2-\int_\R rf'(\vp)w^2=
 \int_\R e^{c\xi}vQ\lc R+rf(\vp_\chi)\rc.
\end{equation}
Since $v=e^{-\frac{c}{2}\xi}w$ lies in the orthogonal of $N(\ML),$ and knowing that 0 is an isolated eigenvalue in the spectrum of $\ML,$
we have for all $t\in[0,T_{max})$:
\begin{equation}\label{Alessandro}
 \int_\R w_\xi^2+\int_\R\lp\frac{c^2}{4}-f'(\vp)\rp w^2 \geq \rho_1 \int_\R w^2.
\end{equation}
Let us set $\zeta>0$ such that
\begin{equation}\label{sharpspectral}
 r>-\frac{\rho_1}{\lV f'\rV_\infty}+\frac{\zeta}{\lV f'\rV_\infty}.
\end{equation}
The function $f$ is $C^1$ with $f'(0)<0.$ Hence, there exists $X_1$ such that for all $\xi>X_1,$ $\displaystyle f'(\vp(\xi))<0.$
Hence we get for all $\xi\in\R,$ for all $\displaystyle t\leq\frac{1}{c}\lp M-X_1+\log\lp\frac{\rho_1-\zeta}{\lV f'\rV_\infty}\rp\rp,$
\begin{equation}\label{domination1}
r(\xi+ct)f'(\vp(\xi))\leq \rho_1-\zeta.
\end{equation}
Let us set
\begin{equation}\label{defN1}
 N_1=\max \left\{0, \frac{1}{c}\lp X_1-\log\lp\frac{\rho_1-\zeta}{\lV f'\rV_\infty}\rp\rp \right\}, \quad t\leq\inf \{T(\a,M),\frac{M}{c}-N_1,T_{max}\}.
\end{equation}
Using (\ref{Alessandro}) and (\ref{domination1}) in (\ref{equationintegree}) and a Young inequality yield:
\begin{align}
 \frac{1}{2}\frac{d}{dt}\lV w\rV_2^2
 +\zeta\lV w\rV_2^2 & \leq  
 \int_\R \lb e^{\frac{c}{2}\xi}w(t,\xi)r(\xi+ct)f(\vp_\chi)\rb d\xi+\int_\R \lb e^{\frac{c}{2}\xi}w(t,\xi)R(\xi,\chi,v) \rb d\xi \nonumber\\
 & \leq \frac{\zeta}{2}\lV w\rV_2^2+\frac{2}{\zeta}\int_\R e^{c\xi}r^2f^2(\vp_\chi)d\xi+\int_\R \lb e^{\frac{c}{2}\xi}wR \rb d\xi \label{domination2}.
\end{align}
The second term in (\ref{domination2}) satisfies:
\begin{align}
 \int_\R e^{c\xi}r^2(\xi+ct)f^2\lp\vp_\chi\rp  & \leq \lV f\rV_\infty^2\int_{-\infty}^{0}e^{c\xi}r^2(\xi+ct)d\xi+\int_0^{+\infty}r^2(\xi+ct)e^{c\xi}f^2(\vp_\chi)d\xi \label{domination3} \\
  & \leq  \frac{\lV f\rV_\infty^2}{2\kappa+c}e^{2\kappa \lp ct-M\rp} \nonumber \\
  & + e^{2\kappa(ct-M)}\int_0^{M-ct}e^{(2\kappa+c)\xi}f^2\lp\vp(\xi+\chi)\rp d\xi \nonumber \\
  &+ \lV r\rV_\infty\int_{M-ct}^{+\infty}e^{c\xi}f^2\lp\vp(\xi+\chi)\rp d\xi. \nonumber
\end{align}
We linearise the function $f$ in 0 and use (\ref{estimeesbistable}). There exists a constant $C_4$ such that, as soon as $\lb\chi(t)\rb\leq 1$ and $\xi>0,$
\begin{equation}\label{fphixiinfty}
\lb f\lp \vp(\xi+\chi(t))\rp\rb \leq C_4 e^{\la \xi}.
\end{equation}
This gives
\begin{align}
 e^{2\kappa(ct-M)}\int_0^{M-ct}e^{(2\kappa+c)\xi}f^2\lp\vp(\xi+\chi)\rp d\xi & \leq e^{2\kappa(ct-M)}\frac{C_4^2}{2\kappa+c+2\la}\lp e^{\lp2\kappa+c+2\la\rp(M-ct)}-1\rp \nonumber \\
  & \leq e^{2\kappa(ct-M)}\frac{2C_4^2}{-2\kappa-c-2\la} \label{domination31}
\end{align}
and
\begin{equation}\label{domination32}
 \int_{M-ct}^{+\infty}e^{c\xi}f^2\lp\vp(\xi+\chi)\rp d\xi \leq \frac{C_4^2}{-c-2\la}e^{(-c-2\la)(ct-M)}.
\end{equation}
From (\ref{hypdomain4}), $-c-2\la>2\kappa.$ We insert (\ref{domination31}) and (\ref{domination32}) in (\ref{domination3}). There exists a constant
$K_1$ such that 
\begin{equation*}
  \int_\R e^{c\xi}r^2(\xi+ct)f^2\lp\vp_\chi\rp d\xi \leq K_1 e^{2\kappa(ct-M)}.
\end{equation*}
Let us go back to (\ref{domination2}). It yields:
\begin{align}
  \frac{1}{2}\frac{d}{dt}\lV w(t)\rV_2^2+\frac{\zeta}{2}\lV w(t)\rV_2^2  & \leq K_1 e^{2\kappa\lp ct- M\rp}+\int_\R \lb e^{\frac{c}{2}\xi}wR \rb d\xi \nonumber \\
  & \leq K_1 e^{2\kappa\lp ct- M\rp} +\frac{\zeta}{4}\lV w(t)\rV_2^2+\frac{4}{\zeta}\int_\R e^{c\xi}R^2(t,\xi) d\xi. \label{domination5}
\end{align}
 Recall that $R$ is defined by (\ref{reste}). 
 From (\ref{estimeesbistable}), the functions $\xi\mapsto e^{c\xi}\vp'^2(\xi)$ and $\xi\mapsto e^{c\xi}f^2(\vp(\xi))$ belong to $L^1(\R).$
 Hence, as soon as $t$ satisfies (\ref{defN1}), the first term in (\ref{domination5}) satisties for some constant $C$ 
 \begin{align}
  \int_\R e^{c\xi}R^2(t,\xi)d\xi & \leq C e^{4\a\lp ct-M\rp}+3\int_\R e^{c\xi}v^4(t,\xi)\Phi_3^2(t,\xi)d\xi \nonumber \\
  & \leq C e^{4\a\lp ct-M\rp}+3\lV\Phi_3\rV_\infty^2\int_\R w^2(t,\xi)v^2(t,\xi)d\xi \nonumber \\
  & \leq C e^{4\a\lp ct-M\rp}+3\lV\Phi_3\rV_\infty^2\lV w(t)\rV_2^2e^{-2\a N_1}. \label{domination6}
 \end{align}
 We set:
\begin{equation}\label{choixalpha}
 \a=\frac{\kappa}{2}.
\end{equation}
Under the assumptions of Lemma \ref{lemmeimage}, from (\ref{domination5}) and (\ref{domination6}), up to a greater $N_1$ we get that the function $w$ satisfies
\begin{equation}\label{estimationGronwall}
 \frac{1}{2}\frac{d}{dt}\lV w(t)\rV_2^2+\frac{\zeta}{8}\lV w(t)\rV_2^2  \leq e^{4\a\lp ct- M\rp}\lp K_1+C\rp.
\end{equation}
With a Gronwall argument in (\ref{estimationGronwall}), we get that for $M$ large enough, for some constant $K_2$ which does not depend on $M,$ 
\begin{equation}\label{dominationl2}
 \lV w(t)\rV_{L^2} \leq K_2e^{2\a(ct- M)},\qquad  0\leq t<\inf\{ T(\a,M),T_{max},\frac{M}{c}-N_1\}.
\end{equation}

\subparagraph{Parabolic estimates on $w$} From the energy estimates on $w$ we derive on this paragraph $L^\infty$ estimates. From (\ref{equationsurw}), the function $w$ 
satisfies a parabolic equation of the form
\begin{equation}\label{parabolic}
 w_t-w_{\xi\xi}+a(t,\xi)w=g(t,\xi)
\end{equation}
with
\begin{equation}\label{aetg}
 \begin{cases}
a(t,\xi) = \frac{c^2}{4}-f'(\vp(\xi))\lp1+r(\xi+ct)\rp -v(t,\xi)\Phi_3(t,\xi)\\
g(t,\xi) =-\langle e_*,rf'(\vp)v\rangle\vp'(\xi)e^{\frac{c}{2}\xi}+e^{\frac{c}{2}\xi}Q\lc R+rf(\vp_\chi)\rc-v(t,\xi)\Phi_3(t,\xi)w(t,\xi).
 \end{cases}
\end{equation}
Under the assumptions of Lemma \ref{lemmeimage} the function $a$ clearly belongs to $\displaystyle L^\infty\lp[0,T]\times\R\rp$ uniformly in $T$
for all $T<\inf\{ T(\a,M),T_{max},\frac{M}{c}-N_1\}.$ 
Let us prove that the funtion $g$ also satisfies this property.
The function $a$ clearly belongs to $L^\infty\lp [0,+\infty[\times\R\rp.$ Let us show that the function $g$ belongs to $L^\infty\lp [0,T]\times\R\rp,$ uniformly in $T$ 
From (\ref{estimeesbistable}) and the smoothness of $f$ the functions
$x\mapsto e^{\frac{c}{2}\xi}\vp'(\xi)$ and $\xi\mapsto e^{\frac{c}{2}\xi}f(\vp(\xi+\chi))$ are uniformly bounded provided that $\chi$ remains bounded.
There exists $K_3>0,$ 
$$
\lb e^{\frac{c}{2}\xi}f(\vp(\xi+\chi))\rb+ \lb e^{\frac{c}{2}\xi}\vp'(\xi)\rb \leq K_3,\qquad \forall \xi\in\R.
$$
The function $g$ is given by 
$$
g(t,\xi) = e^{\frac{c}{2}\xi}\lp \vp'\chi v\Phi_1+\chi\chi'\lp \vp'+f(\vp)\rp\Phi_2 +rf(\vp_\chi)-\langle e_*,R+rf(\vp_\chi)+rvf'(\vp)\rangle\rp.
$$
Hence we have
\begin{align}
 \lb g(t,\xi)\rb \leq & ~K_3 \lp \lV\Phi_1\rV_\infty \lb \chi v\rb+\lV\Phi_2\rV_\infty (1+\lV f'\rV_\infty)\lb \chi\chi'\rb+\lV r\rV_\infty\lV f\rV_\infty\rp \nonumber \\
  & +K_3\lp\lV e_*\rV\lp |R|+\lV r\rV_\infty\lV f\rV_{C^1}(1+|v|)\rp\rp. \nonumber
\end{align}
This inequality provides the desired $L^\infty$ estimate provided that $v,\chi,\chi'$ remain bounded. Now we can apply classical parabolic estimates (see \cite{Lady}, Thm 8.1 p.192 for instance)
for (\ref{parabolic}) with our previous estimate (\ref{dominationl2}).
There exists a constant $K_4$ such that, for all $T<\inf\{ T(\a,M),T_{max},\frac{M}{c}-N_1\},$
\begin{equation}\label{estimeelinfiniw}
 \lV w\rV_{L^\infty\lp [0,T]\times \R\rp}\leq K_4e^{2\a\lp cT-M\rp}.
\end{equation}

\subparagraph{From $w$ to $v$} The function $v$ is given by $v=e^{-\frac{c}{2}\xi}w.$ Let us set 
\begin{equation}\label{definitionA}
 \sigma=\frac{\kappa}{2c},\quad T<\inf\{ T(\a,M),T_{max},\frac{M}{c}-N_1\},\quad A(T)=\sigma\lp M-cT\rp>0.
\end{equation}
The function $v$ satisfies for $t\in[0,T]$ on the half line $(-\infty,-A(T)]$
\begin{equation}\label{equationsurvA}
 \begin{cases}
v_t-cv_\xi-v_{\xi\xi}-f'(\vp)(1+r)v=\langle e_*,rf'(\vp)v\rangle \vp'+Q\lc R+rf(\vp_\chi)\rc \\
v(0,\xi)=0,\ v(t,-\infty)=0,\ v(t,-A)=w(t,-A)e^{\frac{c}{2}A(T)}.
 \end{cases}
\end{equation}
We have that $f'(\vp(\xi))\to f'(1)<0$ as $\xi\to-\infty.$ Hence, up to a larger $N_1$, 
$$
-f'(\vp)(1+r(\xi+ct))\geq -\frac{1}{2}f'(1)>0,\qquad \forall t\in[0,T],\forall \xi\in(-\infty,-A(T)].
$$
We recall that $e_*$ is defined by (\ref{definitioneetoile}) and is continuous, and so is $Q.$ The function $\vp'$ satisfies
(\ref{estimeesbistable}). The perturbative term satisfies (\ref{hypothesesonr}). Hence, for some constant $K_5,$
for all $t\in[0,T],$ for all $\xi\in(-\infty,-A(T)],$ considering hypotheses of Lemma \ref{lemmeimage},
\begin{align}
 \lb \langle e_*,rf'(\vp)v\rangle \vp'+Q\lc R+rf(\vp_\chi)\rc\rb & \leq K_5\lp \lV v\rV_\infty e^{-\mu A}+\lV v\rV_\infty^2+\chi^2+\lb \chi\chi'\rb+e^{\kappa(-A+cT-M)}\rp \nonumber \\
 & \leq K_5\lp e^{(\a+\sigma\mu)(cT-M)}+3e^{2\a(cT-M)}+e^{{2\a(1+\sigma)(cT-M)}}\rp. \label{termesourcevA}
\end{align}
Indeed, we have fixed $2\a=\kappa$ in (\ref{choixalpha}). Finally, using (\ref{estimeelinfiniw}), we have
\begin{equation}\label{consitionaubordvA}
|v(t,A(T))|\leq K_4 e^{(2\a-\frac{\sigma c}{2})(cT-M)}.
\end{equation}
Combining (\ref{termesourcevA}) and (\ref{consitionaubordvA}) in (\ref{equationsurvA}) with the parabolic maximum principle and
our previous estimate (\ref{estimeelinfiniw}) for $\xi>-A,$ we get, for some constant $C_3$:
\begin{equation}
 \lV v\rV_{L^\infty\lp[0,T]\times\R\rp}\leq C_3e^{\gamma(cT-M)},\ \textrm{ with }\gamma=\inf\{ 2\a-\frac{3\a}{2},2\a(1+\sigma),\a+\sigma\mu\}
\end{equation}
which concludes the proof of Lemma \ref{lemmeimage}.
\qed

\subsection{Equation on $\MN(\ML)$}
\begin{lem}\label{lemmenoyau}
 Let $\chi,v$ be solution of (\ref{systemedecompose}). Let $\a=\frac{\kappa}{2},\gamma, M_1, N_1$ given by Lemma \ref{lemmeimage}. 
There exists a constant $C_4$ such that for all $M>M_1,$ for all $t<\inf\{ T(\a,M),T_{max},\frac{M}{c}-N_1\},$
$$
\max\{|\chi(t)|,|\chi'(t)|\} \leq C_4 e^{\gamma(ct- M)}.
$$
\end{lem}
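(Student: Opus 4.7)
The first equation in the system (\ref{systemedecompose}), combined with the initial condition $\chi(0)=0$, rewrites as
\[
\chi'(t) = \bigl\langle e_*, r(\cdot+ct)f'(\vp)v\bigr\rangle + \bigl\langle e_*, r(\cdot+ct)f(\vp_\chi)\bigr\rangle + \bigl\langle e_*, R(t,\cdot,\chi,v)\bigr\rangle.
\]
I would bound each of the three scalar products by $C e^{\gamma(ct-M)}$ on the time interval in question, deduce the same estimate on $\chi'$, and then integrate in $t$ to recover the estimate on $\chi$ itself.

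The first and third terms are the easier ones. For $\langle e_*, rf'(\vp)v\rangle$, expanding the definition (\ref{definitioneetoile}) leads to an integral involving $e^{c\xi}\vp'(\xi)f'(\vp(\xi))$, which is in $L^1(\R)$ thanks to (\ref{estimeesbistable}); combined with the bound $\lV v(t)\rV_\infty \leq C_3 e^{\gamma(ct-M)}$ from Lemma \ref{lemmeimage} and the uniform bound on $r$, this directly gives the estimate. For $\langle e_*, R\rangle$, I would use the decomposition (\ref{reste}): the summands $v\chi\vp'\Phi_1$, $\chi\chi'(\vp'+f(\vp))\Phi_2$ and $v^2\Phi_3$ contribute respectively at most $C e^{(\alpha+\gamma)(ct-M)}$, $C e^{2\alpha(ct-M)}$ and $C e^{2\gamma(ct-M)}$, using the a priori bounds $|\chi|, |\chi'|\leq e^{\alpha(ct-M)}$ from the definition of $T(\alpha,M)$ together with Lemma \ref{lemmeimage}. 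Since $\alpha+\gamma$, $2\alpha$ and $2\gamma$ are all strictly larger than $\gamma$ (recall $\gamma \leq 2\alpha$ by construction in Lemma \ref{lemmeimage}) and $ct - M < 0$, each of these is dominated by $C e^{\gamma(ct-M)}$.

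The main difficulty lies in the term $\langle e_*, rf(\vp_\chi)\rangle$, the only one whose smallness does not pass through $v$ or $\chi$. I would handle it exactly as in the passage (\ref{domination3})--(\ref{domination32}) of Lemma \ref{lemmeimage}: split the integral
\[
\bigl\langle e_*, rf(\vp_\chi)\bigr\rangle = \frac{1}{\Lambda}\int_\R e^{c\xi}\vp'(\xi) r(\xi+ct) f(\vp(\xi+\chi)) d\xi
\]
into the three regions $\xi<0$, $0\leq\xi\leq M-ct$ and $\xi>M-ct$, exploiting the exponential bound $|r(\xi+ct)|\leq e^{\kappa(\xi+ct-M)}$ on the first two regions and only the $L^\infty$ control of $g$ on the last. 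The exponential estimates (\ref{estimeesbistable}) on $\vp'$ and (\ref{fphixiinfty}) on $f(\vp_\chi)$, combined with the technical assumption (\ref{hypdomain4}) ensuring $-c-2\la>2\k$, produce a bound $C e^{\kappa(ct-M)} = C e^{2\alpha(ct-M)}$ on each piece, which is in turn $\leq C e^{\gamma(ct-M)}$.

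Putting the three estimates together yields $|\chi'(t)|\leq \tilde C e^{\gamma(ct-M)}$ on $[0,\inf\{T(\alpha,M), T_{max}, M/c - N_1\})$. Integrating from $0$ with $\chi(0)=0$ then gives $|\chi(t)|\leq \frac{\tilde C}{c\gamma} e^{\gamma(ct-M)}$, and setting $C_4$ equal to the maximum of the two constants concludes the proof. The point worth stressing is that since $\gamma>\alpha$, the resulting bound is strictly sharper than the a priori bound encoded in the definition of $T(\alpha,M)$, which is precisely the ingredient needed for the bootstrap argument underlying Proposition \ref{toyprop}.
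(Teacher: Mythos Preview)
Your proposal is correct and follows essentially the same approach as the paper: the same three-term split of $\chi'$, the same bound on $\langle e_*, rf'(\vp)v\rangle$ via Lemma \ref{lemmeimage}, the same three-region decomposition of $\langle e_*, rf(\vp_\chi)\rangle$ using (\ref{estimeesbistable}), (\ref{fphixiinfty}) and (\ref{hypdomain4}), and the same final integration. The only cosmetic difference is that for the quadratic remainder $\langle e_*, R\rangle$ you track the individual exponents $\alpha+\gamma$, $2\alpha$, $2\gamma$ (even sharpening the $v^2$ term via Lemma \ref{lemmeimage}), whereas the paper simply bounds $|R|$ by $Ce^{2\alpha(ct-M)}$ using only the a priori control from $T(\alpha,M)$; both routes are equivalent here.
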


\begin{proof}
 The shift function $\chi$ satisfies the following equation, still using the notations given by (\ref{notationsxiRr}):
 \begin{equation}\label{equationxi}
  \begin{cases}
 \chi'(t) = \langle e_*, R\rangle + \langle e_*, r(f'(\vp)v+f(\vp_\chi))\rangle \\
 \chi(0) = 0.
  \end{cases}
 \end{equation}
Hence
\begin{equation}\label{inegalitexipoint}
 \lb \chi'(t)\rb \leq \lb \langle e_*,R\rangle\rb + \lb \langle e_*,rf'(\vp)v\rangle\rb + \lb \langle e_*,rf(\vp_\chi)\rangle\rb.
\end{equation}
The higher order term $R$ is given by (\ref{reste}). Under the assumptions on $T(\a,M)$ it yields
\begin{align}
\lb \langle e_*,R\rangle\rb & \leq  \lV e_*\rV.\lV R\rV_\infty \nonumber \\
 & \leq \lV e_*\rV e^{2\a(ct-M)}\lp C_2(\lV\Phi_1\rV_\infty+\lV\Phi_2\rV_\infty)+\lV\Phi_3\rV\rp_\infty. \label{inegalitexipoint1}
\end{align}
From Lemma \ref{lemmeimage} we get 
\begin{equation}\label{inegalitexipoint2}
 \lb \langle e_*,rf'(\vp)v\rangle\rb \leq C_3 \lV e_*\rV.\lV r\rV_\infty\lV f'\rV_\infty e^{\gamma(ct- M)}.
\end{equation}
Let us deal with the last term. We have:
\begin{align}
 \lb \langle e_*,rf(\vp_\chi)\rangle\rb & = \lb \int_\R e^{c\xi}\vp'(\xi)r(\xi+ct)f(\vp(\xi+\chi))d\xi\rb \label{inegalitexipoint3} \\
  & \leq \lV \vp'\rV_\infty \lV f\rV_\infty \int_{-\infty}^0 e^{c\xi}e^{\kappa(\xi+ct-M)}d\xi \nonumber \\
  & + \int_0^{M-ct}\lb e^{c\xi}e^{\kappa(\xi+ct-M)}\vp'(\xi)f(\vp(\xi+\chi))\rb d\xi \nonumber \\
  & + \lV r\rV_\infty\int_{M-ct}^{+\infty}\lb e^{c\xi} \vp'(\xi)f(\vp(\xi+\chi))\rb d\xi. \nonumber
\end{align}
Once again, we use (\ref{estimeesbistable}) and (\ref{fphixiinfty}). It yields
\begin{equation}\label{inegalitexipoint31}
 \int_0^{M-ct}\lb e^{c\xi}e^{\kappa(\xi+ct-M)}\vp'(\xi)f(\vp(\xi+\chi))\rb d\xi \leq \frac{C_2C_4}{-c-2\la}e^{\kappa(ct-M)}
\end{equation}
and
\begin{equation}\label{inegalitexipoint32}
 \int_{M-ct}^{+\infty}\lb e^{c\xi} \vp'(\xi)f(\vp(\xi+\chi))\rb d\xi \leq \frac{C_2C_4}{-c-2\la}e^{(-c-2\la)(ct-M)}.
\end{equation}
We have that $\kappa\leq -c-2\la$ by hypothesis (\ref{hypdomain4}). Hence, using (\ref{inegalitexipoint31}) and (\ref{inegalitexipoint32}) in (\ref{inegalitexipoint3})
gives that, for some constant $K_5$ and under hypotheses of the lemma,
\begin{equation}\label{inegalitexipoint4}
   \lb \langle e_*,rf(\vp_\chi)\rangle\rb \leq K_5 e^{\kappa(ct-M)}.
\end{equation}
Hence, (\ref{inegalitexipoint1}), (\ref{inegalitexipoint2}) and (\ref{inegalitexipoint4}) in (\ref{inegalitexipoint}) gives, for some constant $K_6,$
\begin{equation}\label{inegalitexipointfin}
 \lb \chi'(t)\rb \leq K_6 e^{\gamma(ct- M)}.
\end{equation}
It remains to integrate (\ref{inegalitexipointfin}) to get the desired result.
\end{proof}

\subsection{Conclusion of the proof}
Let $\a=\frac{\kappa}{2},\gamma,M_1,N_1$ given by Lemma \ref{lemmeimage}. We set 
\begin{equation}
 N_2=\frac{1}{c(\gamma-\a)}\log\lp\max \{1,C_3,C_4\}\rp.
\end{equation}
From Lemma \ref{lemmeimage} and Lemma \ref{lemmenoyau}, we get that, for all $M>\max\{M_1,cN_2\},$
$$
T(\a,M)\geq \min\left\{ T_{max},\frac{M}{c}-N_2,\frac{M}{c}-N_1\right\}.
$$
Now we deal with $T_{max}$ given by Lemma \ref{lemnoyauimage}. We have, for all $t,M$ satisfying hypotheses of Lemma \ref{lemmeimage},
\begin{align}
 \lV u(t)-\vp\rV_\infty & = \lV \vp(.+\chi(t))+v(t)-\vp\rV_\infty \nonumber \\
  & \leq  \lV \vp(.+\chi(t))-\vp\rV_\infty + \lV v(t)\rV_\infty \nonumber \\
  & \leq \lb\chi(t)\rb\lV\vp'\rV_\infty+\lV v(t)\rV_\infty \nonumber \\
  & \leq \lp\lV\vp'\rV_\infty+1\rp\max\{C_3,C_4\} e^{\gamma(ct-M)}. \label{finpertubation}
\end{align}
Let us set 
$$
N_3 = \frac{1}{\gamma c}\log\lp \frac{\lp\lV\vp'\rV_\infty+1\rp\max\{C_3,C_4\}}{\e_1}\rp,\ N_0=\max\{N_1,N_2,N_3\},\ M_0=\max \{M_1,cN_0\}
$$
and, according to Lemma \ref{lemnoyauimage}, for all $M>M_0,$ $T_{max}>\frac{M}{c}-N_0.$ The proof is then concluded by (\ref{finpertubation}).
\qed

\section{Cylinder-like domains: proof of Theorem \ref{thmgrosmodele}}
The proof of the existence part relies on the same kind of arguments that are used in the previous section.
The main new argument is Lemma \ref{lemtail} which provides a control of the solutions ahead of the front, where heterogeneities are no longer negligible.
The point is again a stability result. We consider the following problem, indexed by $M:$ 
 \begin{equation}\label{cauchygros}
 \begin{cases}
\partial_t u- \Delta u = f(u) & t>0,\ (x,y)\in\Omega \\
\partial_\nu u=0 & t>0,\ (x,y)\in\partial\Omega \\
u(0,x,y)=\vp(x+M):=\tau_M\vp(x).
 \end{cases}
\end{equation}

Throughout this section, we will denote by $C$ a generic positive constant, which may differ from place to place 
even in the same chain of inequalities. Moreover, we will use classical notations concerning Hölder spaces:
\begin{itemize}
 \item For any $\d\in(0,1)$ and integer $k,$ if $\Omega_0$ is a spatial domain included in $\R^{N}$ or $\R{N+1},$ $C^{k,\d}(\Omega_0)=C^{k+\d}$ is the space of functions whose derivatives 
 up to order $k$ lie in $C^\d(\Omega_0),$ the space of $\d-$Hölder functions.
 \item For parabolic problems with functions depending on the time $t$ and the space, the space $C^{k,\d}\lp [T^-,T^+]\times\Omega_0\rp$ is the space 
 of functions that are $C^{\frac{k}{2}+\frac{\d}{2}}$ in time and $C^{k,\d}$ in space.
\end{itemize}
The main ingredient is the following. 
\begin{prop}\label{grosprop}
 Under assumptions (\ref{hypbistable}) on $f$ and (\ref{hypdomain1}) and (\ref{hypdomain2}) on the domain $\Omega,$ there exist $\gamma>0$ 
 and two positive constants $M_0,N_0$ such that for all $M>M_0,$ for all $\displaystyle T\in\lp 0,\frac{M}{c}-N_0\rp,$ for all $(x,y)\in\Omega,$ the solution $u$ of 
 (\ref{cauchygros}) satisfies
 \begin{equation}
 \lV u(t,x,y)-\tau_M\vp(x-ct)\rV_{C^{2,\d}\lp [0,T]\times\Omega\rp} \leq C e^{\gamma\lp cT-M\rp}.
 \end{equation}
\end{prop}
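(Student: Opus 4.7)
The plan is to follow the same three-step scheme as for Proposition \ref{toyprop}: straighten the domain, perform a Lyapunov-Schmidt decomposition in the moving frame, and bootstrap between the kernel and range equations. Two features distinguish the present situation from the one-dimensional case. First, the perturbation is now of geometric origin and acts on the full second-order operator, not only on the reaction term. Second, its coefficients are only bounded, not small, in the region $x>0$, so coercivity cannot be recovered there by smallness as in (\ref{sharpspectral}). This forces an additional comparison argument through Lemma \ref{lemtail}, which is the main new technical point.

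First, I would pull the problem back to the straight cylinder $\Omega^\infty=\R\times\o^\infty$ via the diffeomorphism $\overrightarrow{\Phi}$ of (\ref{hypdomain1}). Setting $U(t,x,z)=u(t,\overrightarrow{\Phi}^{-1}(x,z))$, equation (\ref{cauchygros}) becomes a parabolic problem on $\Omega^\infty$ with a perturbed Laplacian and a perturbed oblique boundary operator, whose $C^1$ norms are bounded by $C\min(e^{\k x},1)$ thanks to (\ref{hypdomain2}). In the moving frame $\xi=x-ct$ the target profile is $\vp(\xi+M)$, and I would use the ansatz
\begin{equation*}
 \ut(t,\xi,z) = \vp(\xi+M+\chi(t)) + v(t,\xi,z), \qquad \chi(0)=0,\ v(0,\cdot)=0,
\end{equation*}
with $v$ orthogonal to $\vp'$ in the sense of $\langle e_*,\cdot\rangle$. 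Because the transverse Neumann Laplacian on $\o^\infty$ has $0$ as a simple eigenvalue with constant eigenfunction, the cylindrical linearisation retains the kernel $\vp'\R$ and the spectral gap $\rho_1$ of $\ML$.

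Inserting this ansatz yields the cylindrical analogue of system (\ref{systemedecompose}): an ODE for $\chi$ obtained by pairing with $e_*$, and a parabolic equation on the cylinder for $v$ in the range of the linearisation. I would then symmetrise by $w=e^{c\xi/2}v$ and run the energy estimate of (\ref{equationintegree})--(\ref{domination5}). The transverse integration by parts is harmless because the perturbed boundary operator differs from $\partial_\nu$ only by an exponentially small term. The geometric source splits into a piece supported in $\{\xi+ct<0\}$ whose $L^2$ norm is of order $e^{2\k(ct-M)}$, exactly as in (\ref{domination3})--(\ref{domination32}), and a tail in $\{\xi+ct>0\}$ that is handled separately via Lemma \ref{lemtail}. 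The estimates on $\chi$ and $\chi'$ are produced term by term as in Lemma \ref{lemmenoyau}, using the exponential decay of $e^{c\xi}\vp'(\xi)$ and $e^{c\xi}f(\vp(\xi))$ together with hypothesis (\ref{hypdomain4}). A bootstrap on the analogue of $T(\a,M)$ closes the argument and produces the $L^\infty$ bound on the cylindrical region.

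The main obstacle is the tail region $\{\xi+ct>0\}$, where the geometric perturbation is only $O(1)$ and smallness cannot be invoked. The idea is the one the author announces at the start of the section: construct a travelling super-solution of the form $\vp(x-ct+\eta) + \psi$, with $\eta$ a small drift and $\psi$ an exponentially decaying correction tailored so that the geometric error in the interior and in the oblique boundary condition are absorbed by the strict negativity of $f'$ near $0$ and near $1$, in the spirit of Fife--McLeod; a parallel sub-solution controls $u$ from below. A comparison argument on the half-domain $\{x>-A\}$, with data inherited from the $L^\infty$ bound of the cylindrical step, then yields the pointwise estimate ahead of the front. Finally, the $C^{2,\d}$ norm claimed in the statement follows from interior and boundary parabolic Schauder estimates applied on $\Omega$, whose boundary is uniformly $C^3$ by (\ref{hypdomain1})--(\ref{hypdomain2}).
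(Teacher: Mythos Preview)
Your outline is in the right spirit and identifies the key difficulty correctly, but it diverges from the paper's argument in two structural ways and is vague at precisely the point where the extra work lies.

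\textbf{Different Lyapunov--Schmidt splitting.} You take a scalar shift $\chi(t)$ and project onto the one-dimensional kernel of $\ML+\Delta_z$ on the cylinder. The paper instead applies the one-dimensional decomposition \emph{fibrewise in $z$}: it writes $\ut(t,\xi,z)=\tau_M\vp(\xi+\chi(t,z))+v(t,\xi,z)$ with $\langle\tau_Me_*,v(t,\cdot,z)\rangle=0$ for every $z$ (Lemma~\ref{lemnoyauimagegros}). The shift $\chi$ then satisfies a genuine parabolic equation $\chi_t-\Delta_z\chi=\langle e_*,R_3\rangle$ on $\o^\infty$ with inhomogeneous Neumann data $\langle e_*,R_4\rangle$, not an ODE. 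Both decompositions are legitimate, but the fibrewise one puts the transverse Laplacian to work on $\chi$ as well and lets the paper treat $\chi$ and $v$ symmetrically by Duhamel.

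\textbf{Semigroup instead of energy on the range.} You propose to repeat the symmetrisation $w=e^{c\xi/2}v$ and the $L^2$ energy argument of Section~2. The paper abandons energy here: it moves \emph{all} geometric perturbation terms, including the second-order ones in (\ref{reste1}) and the boundary residual (\ref{reste2}), to the right-hand side as sources $R_3,R_4$, proves a uniform H\"older bound on them (Lemma~\ref{controlrestes}), and then integrates the clean equation $v_t-\ML v-\Delta_z v=Q[R_3]$, $\partial_{\nu^\infty}v=Q[R_4]$ by a Duhamel formula for the analytic semigroup $e^{t(\ML+\Delta_z)}$ with the decay (\ref{decML}). This sidesteps exactly the issues your sketch glosses over: in an energy estimate the perturbed second-order cross terms $\ut_{z_iz_j}\partial_x\Phi_i\partial_x\Phi_j$ and $\ut_{\xi z_i}\partial_x\Phi_i$ would have to be integrated by parts, and the inhomogeneous Neumann data $R_2$ produces boundary integrals that are \emph{not} exponentially small on $\{\xi+ct>0\}$, contrary to what you assert. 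There is no analogue of the smallness hypothesis $g>-\varpi$ here, so one cannot recover coercivity by absorption as in (\ref{sharpspectral}).

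\textbf{How the tail is actually used.} The role of Lemma~\ref{lemtail} in the paper is not to run a separate comparison argument on a half-domain at the end, but to feed into Lemma~\ref{controlrestes}: the pure-exponential super-solution $\e\psi(x,y)e^{-\a(x-ct)}$ (not a Fife--McLeod shifted wave) shows $u$ is $O(e^{-\a_1(\xi+M)})$ ahead of the front, parabolic estimates upgrade this to $C^{2,\d}$, and then the residuals $R_1,R_2$ are bounded by splitting $\Omega^\infty$ at $\tilde X=-M+\tfrac12(M-cT)$: for $\xi<\tilde X$ the $\Phi$-coefficients are $O(e^{-\frac{\k}{2}(M-cT)})$ by (\ref{hypdomain2}), while for $\xi>\tilde X$ the derivatives of $\ut$ are $O(e^{-\frac{\a_1}{2}(M-cT)})$ by the tail bound. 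Your split at $\xi+ct=0$ does not by itself give smallness of the source in the intermediate zone, and the sub/super-solution pair you sketch is more machinery than is needed.

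In short: your plan can likely be pushed through, but the paper's route---fibrewise $\chi(t,z)$, all perturbations as H\"older-controlled sources via Lemma~\ref{controlrestes}, then semigroup estimates---is both cleaner and avoids the boundary and second-order integration-by-parts issues that your energy sketch leaves unaddressed.
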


In the following subsection, we prove our main result Theorem \ref{thmgrosmodele} using the above proposition.
The rest of the paper is devoted to the proof of this proposition.

\subsection{Proof of Theorem \ref{thmgrosmodele} using Proposition \ref{grosprop}}
The argument is the same as for the proof of theorem \ref{solentieretoy} and comes from \cite{BHM}. For all integer $n$ 
let $u_n$ be the solution of the Cauchy problem
\begin{equation}\label{Cauchysuite}
 \begin{cases}
\partial_t u_n-\Delta u_n = f(u_n),\ t>-n,\ (x,y)\in\Omega \\
\partial_\nu u_n = 0, \ t>-n,\ (x,y)\in\partial\Omega \\
u(-n,x,y)=\vp(x+cn).
 \end{cases}
\end{equation}
From parabolic estimates the sequence $\lp u_n\rp_n$ converges up to extraction to some entire function $u_\infty$ locally uniformly 
together with its second derivative in space and first in time. Moreover, from proposition \ref{grosprop}, passing to the limit we get, 
for all $T<N_0$
\begin{equation}\label{convforte}
 \lV u(t,x,y)-\vp(x-ct)\rV_{C^2(\Omega)\times C^1(-\infty,T)} \leq Ce{\gamma cT}
\end{equation}
which gives the desired property (\ref{convergencetowave}).

It remains to prove the uniqueness of the entire solution. The  main point is to get a bouond from belo for the time derivative of $u_\infty.$
Following \cite{BHM}, we define for all $\eta$ small enough 
$$
\Omega_\eta(t)=\left\{ (x,y)\in\Omega, \eta\leq u_\infty(t,x,y)\leq 1-\eta\right\}.
$$
We have the following lemma:
\begin{lem}
For any $\eta\in\lp0,\frac{1}{2}\rp$ there exist $\d>0$ and $T(\eta,\d)\in\R$ such that
$$
\partial_t u_\infty (t,x,y)\geq \d
$$
for all $t\in (-\infty,T(\eta,\d)],$ for all $(x,y)\in\Omega_\eta(t).$
\end{lem}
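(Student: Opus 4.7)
The plan is to transfer the obvious analogue of the statement for the planar travelling wave to $u_\infty$, using the exponential convergence provided by Proposition \ref{grosprop}. For the planar wave $\vp(x-ct)$ we have $\partial_t \vp(x-ct) = -c\vp'(x-ct)$, and both $c>0$ (thanks to (\ref{1plusstable})) and $\vp'<0$ strictly on all of $\R$ (a standard consequence of the ODE (\ref{bistableTW}) combined with the strong maximum principle / sliding). So on any bounded $\xi$-interval, $-c\vp'(\xi)$ is uniformly bounded below by a positive constant, and the strategy is to show that $\Omega_\eta(t)$, lifted to the moving frame, sits in such a bounded interval once $t$ is negative enough.

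First I would use the uniform convergence (\ref{convergencetowave}) to locate the transition zone. Choose $T_1\in\R$ so that for all $t\le T_1$ and $(x,y)\in\Omega$ one has $|u_\infty(t,x,y)-\vp(x-ct)|\le \eta/2$. Then $(x,y)\in\Omega_\eta(t)$ with $t\le T_1$ forces $\vp(x-ct)\in[\eta/2,1-\eta/2]$. Because $\vp$ is a continuous strict bijection from $\R$ to $(0,1)$, the preimage $\vp^{-1}([\eta/2,1-\eta/2])$ is a compact interval $[\xi_-,\xi_+]$ depending only on $\eta$. Hence $x-ct\in[\xi_-,\xi_+]$, and by continuity of $\vp'$ together with $\vp'<0$,
\begin{equation*}
m:=\min_{\xi\in[\xi_-,\xi_+]}|\vp'(\xi)|>0,\qquad -c\vp'(x-ct)\ge cm\quad\text{on }\Omega_\eta(t),\ t\le T_1.
\end{equation*}

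Second, I would exploit the $C^1$-in-time part of the bound (\ref{convforte}): there is $C>0$ such that for every $T\le T_1$,
\begin{equation*}
\sup_{t\le T,\,(x,y)\in\Omega}\bigl|\partial_t u_\infty(t,x,y)+c\vp'(x-ct)\bigr|\le Ce^{\gamma cT}.
\end{equation*}
Pick $T(\eta,\delta)\le T_1$ so small that $Ce^{\gamma cT(\eta,\delta)}\le cm/2$. Then for every $t\le T(\eta,\delta)$ and $(x,y)\in\Omega_\eta(t)$,
\begin{equation*}
\partial_t u_\infty(t,x,y)\ge -c\vp'(x-ct)-Ce^{\gamma ct}\ge cm-\tfrac{cm}{2}=\tfrac{cm}{2}=:\delta,
\end{equation*}
which is the desired estimate.

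The only nontrivial ingredient beyond the already-stated results is the $C^1$-in-time control of $u_\infty-\vp(\cdot-ct)$; assuming Proposition \ref{grosprop} really gives this (as the norm in (\ref{convforte}) suggests), the remainder of the argument is soft. If one wanted to avoid relying on the time-derivative bound directly, the same conclusion could be obtained by differentiating (\ref{Neumannpb}) in $t$ to get a linear parabolic equation for $w=\partial_t u_\infty$, approximating its initial (i.e.\ far-past) data by $-c\vp'(x-ct)$ via (\ref{convergencetowave}) and parabolic regularity, and applying a comparison/Harnack argument on $\Omega_\eta(t)$; this is the route I would pursue as a fallback.
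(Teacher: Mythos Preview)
Your argument is correct and coincides with the paper's own proof, which simply notes that the result follows from the convergence of $\partial_t u_\infty$ to $-c\vp'(x-ct)$ in (\ref{convforte}) together with the strict negativity of $\vp'$. You have spelled out the details the paper leaves implicit (locating $\Omega_\eta(t)$ in a compact $\xi$-interval and then choosing $T$ to absorb the error), but the route is the same.
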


The proof is immediate and comes from the convergence of the time derivative in (\ref{convforte}) and the fact that the wave $\vp$ 
is decreasing. The uniqueness thus follows from \cite{BHM}, section 3.
\qed

\subsection{A preliminary result: travelling super-solutions}
To control the tail of our solution when the domain is heterogeneous, we will use the following lemma.
\begin{lem}\label{lemtail}
 There exist $\a_1,\a_2>0$ and a positive function $\psi$ with $\a_2\leq \psi\leq\frac{1}{\a_2}$ such that for all $\a\leq \a_1,$ for all $\e>0,$
 $$
 \us : (t,x,y)\mapsto \e \psi(x,y)e^{-\a (x-ct)}
 $$
 is a super-solution of the problem (\ref{Neumannpb}) on the set $\displaystyle \Omega \cap \left\{ x>ct\right\}.$
\end{lem}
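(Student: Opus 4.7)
The plan is to reduce the super-solution property to two pointwise conditions on $\psi$ by direct computation, then construct a bounded perturbation of the constant function $1$ adapted to the boundary geometry. A direct computation with $\us(t,x,y)=\e\,\psi(x,y)e^{-\a(x-ct)}$ yields
\[
\us_t-\Delta\us = \us\left[\a(c-\a) + \frac{2\a\,\partial_x\psi-\Delta\psi}{\psi}\right],\qquad \partial_\nu\us = \e\, e^{-\a(x-ct)}\bigl[\partial_\nu\psi-\a\,\nu_x\,\psi\bigr],
\]
where $\nu_x$ denotes the $x$-component of the outward unit normal to $\partial\Omega$. Since $\e\,e^{-\a(x-ct)}>0$, the super-solution property on $\Omega\cap\{x>ct\}$ (including $\partial_\nu\us\geq 0$) is equivalent to the two $\e$-independent pointwise conditions (i) $\a(c-\a)+(2\a\,\partial_x\psi-\Delta\psi)/\psi\geq f(\us)/\us$ in $\Omega$, and (ii) $\partial_\nu\psi\geq \a\,\nu_x\,\psi$ on $\partial\Omega$, both holding uniformly for $\a\in(0,\a_1]$.

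For the construction, I would seek $\psi=1+\a_1 h$ with $h\in C^2(\overline\Omega)$ a bounded corrector that vanishes in the cylindrical region (where $\nu_x=0$, making both conditions trivial) and satisfies $\partial_\nu h\geq (\nu_x)_+$ on the variable part of $\partial\Omega$. Such an $h$ can be obtained as the solution of a standard linear Neumann problem, say $-\Delta h + h = 0$ in $\Omega$ with $\partial_\nu h = (\nu_x)_+$ on $\partial\Omega$. Hypothesis (\ref{hypdomain2}) is essential: the $C^2$ bound on $\overrightarrow{\Phi}-\mathrm{Id}$ together with its exponential decay as $x\to-\infty$ ensure that $(\nu_x)_+$ is bounded and decays exponentially in the cylindrical tail, so the auxiliary problem has a unique bounded solution with uniform $C^2$ estimates. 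With $\a_1\lV h\rV_\infty$ small, one has $\a_2\leq\psi\leq 1/\a_2$ for some $\a_2>0$, and (ii) holds since $\a_1\partial_\nu h - \a\,\nu_x(1+\a_1 h)\geq \a_1(\nu_x)_+ - \a\,\nu_x(1+\a_1 h)\geq 0$ whenever $\a\leq\a_1$ and $\a_1\lV h\rV_\infty$ is small.

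Condition (i) is then checked by computing $(2\a\,\partial_x\psi-\Delta\psi)/\psi = \a_1(2\a\,\partial_x h-\Delta h)/(1+\a_1 h)=O(\a_1)$ uniformly in $\Omega$, so its LHS equals $\a c + O(\a^2+\a_1)$; by the bistable assumption (\ref{hypbistable}), $f(\us)\leq 0$ on $[0,\theta]$, making (i) immediate in the sub-threshold regime where the super-solution is applied in the proof of Proposition \ref{grosprop}, and the standard extension of $f$ by zero outside $[0,1]$ handles $\us\geq 1$. The main expected obstacle is precisely the construction of the corrector $h$ with the one-sided boundary condition and uniform $C^2$ control: this is where (\ref{hypdomain2}) is crucial, since it localizes the geometric heterogeneity in an effectively bounded strip and makes the auxiliary Neumann problem well-posed with the required uniform bounds.
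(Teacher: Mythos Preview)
Your approach is genuinely different from the paper's. The paper constructs $\psi$ \emph{explicitly and geometrically}: using the uniform interior sphere condition (which follows from (\ref{hypdomain1}) alone), it takes the distance-to-the-boundary function $d$ on a tubular neighbourhood $\Omega_{2r}$, composes it with a cut-off $g$, and sets $\psi=\bigl(\tfrac{3r}{2}-g(d)\bigr)+a$ near $\partial\Omega$ and $\psi=a$ away from it. The point is that $\nabla\psi=\overrightarrow{\nu}$ on $\partial\Omega$, so $\partial_\nu\psi=1$ identically; the boundary inequality then reduces to $1-\a\psi\geq 0$, which holds once $\a<1/(a+\tfrac{3r}{2})$. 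The interior inequality is handled by choosing $a$ large enough that the zero-order term $\omega(\a)^2\psi$ absorbs $|\Delta\psi|$ and $|\partial_x\psi|$, using $f(s)\leq (f'(0)+\delta)s$ for $s$ small. No auxiliary PDE is solved, and crucially hypothesis (\ref{hypdomain2}) is \emph{not} used at all---only the uniform $C^2$ regularity of $\partial\Omega$.

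Your PDE-corrector route ($\psi=1+\a_1 h$ with $-\Delta h+h=0$, $\partial_\nu h=(\nu_x)_+$) can be made to work, but a few points deserve tightening. First, there is no ``cylindrical region'' in this paper's setting (that was the hypothesis of \cite{BBC} being relaxed here); $\nu_x$ need not vanish anywhere, and $h$ will not vanish either---what you actually need (and what coercivity of $-\Delta+1$ gives) is merely that $h$ is bounded with uniform $C^2$ estimates. Second, invoking (\ref{hypdomain2}) is unnecessary for this: bounded Neumann data on a uniformly regular unbounded domain is enough for a bounded solution of the coercive problem. Third, your check of (i) is a bit loose: since $\Delta h=h$, the quantity $(2\a\partial_x\psi-\Delta\psi)/\psi=\a_1(2\a\partial_x h-h)/(1+\a_1 h)$ can be \emph{negative} of order $\a_1$, so ``$f(\us)\leq 0$ on $[0,\theta]$'' alone does not make (i) immediate; you need instead that for small $\us$ one has $f(\us)/\us\leq f'(0)+\delta<0$, which is exactly what the paper uses. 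Finally, in (ii) the inequality $\a_1(\nu_x)_+\geq\a\nu_x(1+\a_1 h)$ for $\nu_x>0$ requires $\a\leq\a_1/(1+\a_1\lV h\rV_\infty)$ rather than $\a\leq\a_1$; this is harmless but should be stated. The paper's distance-function construction sidesteps all of this by giving $\partial_\nu\psi=1$ for free.
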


\begin{proof}
 Let us recall that a super-solution in a domain with boundary such as (\ref{Neumannpb}) is a function $\us$ which satisfies the following inequality
 \begin{equation}\label{supergeneral}
  \begin{cases}
 \partial_t \us -\Delta \us -f(\us) \geq 0 & t\geq0,(x,y)\in\Omega \\
 \partial_\nu \us \geq 0 & t\geq0,(x,y)\in\partial\Omega
  \end{cases}
 \end{equation}

\paragraph{Distance function to the boundary}
From our hypotheses (\ref{hypdomain1}), there exists $r>0$ such that $\Omega$ satisfies a sliding sphere 
property of radius $3r:$ for all $(x_0,y_0)\in\partial\Omega,$ there exists $(x_1,y_1)\in\Omega$ such that
\begin{equation}\label{slidingsphere}
  B\lp(x_1,y_1),3r\rp\subset\Omega\textrm{  and  }\overline{B\lp(x_1,y_1),3r\rp}\cap\partial\Omega=\{(x_0,y_0)\}.
\end{equation}
For all $(x,y)\in](x_0,y_0),(x_1,y_1)]$ we denote $(x_0,y_0):=\Pi(x,y)$ the orthogonal projection of $(x,y)$ on $\partial\Omega.$
It satisfies $dist\lp (x,y),\partial \Omega\rp = \lV (x,y)-\Pi(x,y)\rV.$ Let us set 
$\Omega_{2r}:=\left\{ (x,y)\in\Omega,\ dist\lp (x,y),\partial\Omega\rp\leq 2r\right\}$. It is well known (see \cite{GT}, section 14.6)
that the distance function to the boundary defined by
\begin{equation}\label{distancefct}
 d : \left\{ \begin{array}{ccl}
              \Omega_{2r} & \longrightarrow & \R^+ \\
              (x,y) & \longmapsto & dist\lp (x,y),\partial\Omega\rp
             \end{array}
 \right.
\end{equation}
is uniformly $C^2$ on $\Omega_{2r}$ and satisfies $\nabla d(x,y) = -\overrightarrow{\nu}\lp \Pi(x,y)\rp.$
 
\paragraph{Construction of $\psi$}
We need a cut-off function: let us choose some function $g$ of the form 
\begin{equation*}
g : \left\{
 \begin{array}{ccl}
  \R^+ & \longrightarrow & \R \\
  s & \longmapsto & \begin{cases}
                             s \textrm{ if }s\in[0,r] \\
                             \frac{3r}{2}\textrm{ if }s\geq 2r
                            \end{cases}
 \end{array}\right.
\end{equation*}
with $g$ smooth, monotonous, and $|g'|,|g''|<1.$
Now, for a positive constant $a$ to be chosen later, we define the function $\psi$ by
\begin{equation}\label{defpsi}
 \psi : \left\{  \begin{array}{ccl}
                  \Omega & \longrightarrow & \R \\
                  (x,y) & \longmapsto & \left\{ \begin{array}{lc}
                                                 \lp\frac{3r}{2}-g\lp d(x,y)\rp\rp+a & \textrm{ if } (x,y)\in\Omega_{2r} \\
                                                 a & \textrm{ if } (x,y)\in\Omega\setminus\Omega_{2r}.
                                                \end{array} \right.
                                                \end{array}  \right.
\end{equation}
This function $\psi$ is uniformly $C^2$ on $\Omega$ with $a\leq \psi\leq a+\frac{3r}{2}$ and $|\Delta\psi|,|\nabla\psi|\leq \lV d\rV_{C^2}.$
Moreover, on $\Omega_r,$ we have $\nabla \psi(x,y) = \overrightarrow{\nu}\lp \Pi(x,y)\rp,$ the outward normal derivative on $\partial\Omega.$

\paragraph{The super-solution}
Let us fix $\d>0$ small enough. For $\displaystyle \a\in\lp0, \frac{c+\sqrt{c^2-4(\fp+\d)}}{2}\rp$ we define 
$\displaystyle \o(\a)=\sqrt{\a c-\a^2-\fp-\d}.$ Let $\a_1$ be such that $\displaystyle\omega(\a_1)^2>-\frac{\fp}{2}.$ 
Now, let us choose $a>0,\a\in(0,\a_1)$ such that
\begin{equation*}
 a>-\frac{4\lV d\rV_{C^2}}{\fp},\qquad \a<\frac{1}{a+\frac{3r}{2}}.
\end{equation*}
The function $\psi$ defined in (\ref{defpsi}) satisfies 
\begin{equation*}
 \begin{cases}
 -\Delta\psi+2\a\partial_x \psi+\omega(\a)^2\psi\geq0, & (x,y)\in\Omega \\
 -\a\psi+1\geq0, & (x,y) \in\partial\Omega.
 \end{cases}
\end{equation*}
From the regularity of $f$ there exists $\e_1$ such that for all $s\in(0,\e_1\lV\psi\rV_\infty)$ we have $f(s)\leq (\fp+\d)s,$ 
and the function
\begin{equation*}
  \us : (t,x,y)\mapsto \e_1\psi(x,y) e^{-\a (x-ct)}
\end{equation*}
is a supersolution in the desired domain. The result follows easily.
\end{proof}

\subsection{From $\Omega$ to a straight cylinder}  
We use the change of variables provided by the diffeomorphism $\overrightarrow{\Phi}$ given in (\ref{hypdomain1}) which sends $\Omega$ into $\Omega^\infty$. 
With an abuse of notation 
we write $z=\Phi(x,y)=\lp \Phi_1(x,y), ... , \Phi_N(x,y)\rp$ and $y=\Phi^{-1}(x,z).$
In the same manner, if $(x,y)$ is in $\partial\Omega,$ we denote $\overrightarrow{\nu}^\infty(x,z)$ the outward normal derivative of $\Omega^\infty$ at a 
point $(x,z)=\overrightarrow{\Phi}(x,y)$ of $\partial\Omega^\infty.$ 
THe function $\Phi$ is thus uniformly $C^3$ on $\Omega$ and from (\ref{hypdomain2}) we have for all $X\in\R$ for some constant $C$
\begin{align}
 \lV D_x\Phi\rV_{C^1\lp\Omega\cap\{x<X\}\rp}+ \lV D_y\Phi-Id_y\rV_{C^1\lp\Omega\cap\{x<X\}\rp} + &  \nonumber \\ 
 \lV \overrightarrow{\nu}-\overrightarrow{\nu}^\infty\circ\overrightarrow{\Phi}\rV_{L^\infty\lp \partial\Omega\cap\{x<X\}\rp} & \leq C\min\left\{ e^{\k X},1\right\}. \label{dominationPhi}
\end{align}
Let us combine the above change of variables $z=\Phi(x,y)$ with the transformation to a moving framework $\xi=x-ct$ and set 
\begin{equation}\label{uetutilda}
 \ut(t,\xi,z)=u(t,x,y)=\ut(t,x-ct,\Phi(x,y))=\ut(t,\xi,\phi(\xi+ct,y))
\end{equation}
where $u$ is the solution of (\ref{cauchygros}).
The equation for $\ut$ in the $(t,\xi,z)-$variables is then given by
\begin{equation}\label{grosapreschange}
 \begin{cases}
\ut_t-c\ut_\xi-\ut_{\xi\xi}-\Delta_z\ut-f(\ut) = R_1(t,\xi,z) & t>0, (\xi,z)\in\Omega^\infty\\
\partial_{\nu^\infty}\ut = R_2(t,\xi,z) & t>0,(\xi,z)\in\partial\Omega^\infty \\
\ut(0,\xi,z)=\tau_M\vp(\xi)
 \end{cases}
\end{equation}
where the two residual terms are as follows:
\begin{align}
 R_1(t,\xi,z) = & \sum_{i=1}^N \lp \ut_{z_i}\frac{\partial^2\Phi_i}{\partial x^2}+2\ut_{\xi z_i}\frac{\partial \Phi_i}{\partial x}+
 \frac{\partial \Phi_i}{\partial x} \sum_{j=1}^N \ut_{z_i z_j}\frac{\partial \Phi_j}{\partial x}\rp + \sum_{i,j=1}^N \ut_{z_j}\frac{\partial^2 \Phi_j}{\partial y_i^2} \label{reste1} \\
  & + \sum_{i=1}^N \lp \ut_{z_i^2}\lp\lp\frac{\partial \Phi_i}{\partial y_i}\rp^2-1\rp+
  \sum_{\substack{j,k=1 \\ (j,k)\neq (i,i)}}^N \ut_{z_j z_k}\frac{\partial \Phi_j}{\partial y_i}\frac{\partial \Phi_k}{\partial y_i}\rp \nonumber  
\end{align}

\begin{align}
 R_2(t,\xi,z) = & \ \nu_x \sum_{i=1}^N \ut_{z_i}\frac{\partial \Phi_i}{\partial x} + \sum_{j=1}^N \nu_{y_j}\lp \ut_{z_j}\lp 1-\frac{\partial \Phi_j}{\partial y_j}\rp
 +\sum_{\substack{k=1 \\ k\neq j}}^N \ut_{z_k}\frac{\partial \Phi_k}{\partial y_j}\rp \label{reste2} \\
 & +\nabla \ut . \lp \overrightarrow{\nu}(\xi+ct,y)-\overrightarrow{\nu}^\infty(\xi+ct,z)\rp \nonumber
\end{align}
where $\Phi$ and all its derivatives have to be considered in the $(x,y)$-variables:
\begin{equation}\label{Phitranslate}
 \Phi=\Phi(x,y)=\Phi\lp\xi+ct,\Phi^{-1}(\xi+ct,z)\rp.
\end{equation}

As all the coefficients in (\ref{grosapreschange}-\ref{reste2}) are bounded 
we can apply the same decomposition as that given by Lemma \ref{lemnoyauimage}:
\begin{lem}\label{lemnoyauimagegros}
 Let $\ut$ be the solution of the Cauchy problem (\ref{grosapreschange}). There exists $\e_1$ such that, if 
 $$
 T_{max} = \sup \left\{ T>0,\lb \ut(t,\xi,z)-\tau_M\vp(\xi)\rb\leq\e_1,\ \forall t<T,(\xi,z)\in\Omega^\infty\right\},
 $$
 there exist two functions $\chi\in C^{2,\d}\lp[0,T_{max}]\times\omega^\infty\rp$ and $v\in C^{2,\d}\lp[0,T_{max}]\times\Omega^\infty\rp$ such that for all
 $t<T_{max},$ 
 \begin{equation}\label{decomposition}
 \ut(t,\xi,z)=\tau_M\vp\lp \xi+\chi(t,z)\rp+v(t,\xi,z) 
 \end{equation}
 where $v$ satisfies $\langle \tau_M e_*,v(t,.,z)\rangle=0$ for all $t<T_{max}$ and $z\in\omega^\infty.$
\end{lem}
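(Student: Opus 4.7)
The plan is to mimic the one-dimensional decomposition of Lemma \ref{lemnoyauimage}, applying the implicit function theorem \emph{pointwise} in the slow variables $(t,z)$, and then recovering parabolic H\"older regularity of the resulting shift $\chi$ from that of $\ut$.

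First I would introduce the functional
\begin{equation*}
F : \R\times \mathfrak{X}\longrightarrow \R, \qquad F(\chi,w)=\big\langle \tau_M e_*,\ w-\tau_M\vp(\cdot+\chi)\big\rangle,
\end{equation*}
where $\tau_M e_*$ is the translated dual element naturally associated with $\tau_M\vp$. The map $F$ is smooth in both arguments, $F(0,\tau_M\vp)=0$, and
\begin{equation*}
\partial_\chi F(0,\tau_M\vp) = -\big\langle \tau_M e_*,\ \tau_M\vp'\big\rangle = -\langle e_*,\vp'\rangle \neq 0
\end{equation*}
by the very definition (\ref{definitioneetoile}) of $e_*$ together with the normalisation $\Lambda$. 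The standard implicit function theorem (see Henry \cite{henry}) then produces $\e_1>0$ and a smooth map $w\mapsto \chi(w)$ defined on $\{w\in\mathfrak{X}\,:\,\|w-\tau_M\vp\|_\infty<\e_1\}$ such that $F(\chi(w),w)=0$ and $\chi(\tau_M\vp)=0$.

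Second, for each $(t,z)\in[0,T_{max})\times\o^\infty$ the function $\xi\mapsto \ut(t,\xi,z)$ belongs to $\mathfrak{X}$ (by the exponential decay of the initial datum $\tau_M\vp$ combined with parabolic estimates on (\ref{grosapreschange}), whose coefficients are bounded), and by definition of $T_{max}$ satisfies $\|\ut(t,\cdot,z)-\tau_M\vp\|_\infty<\e_1$. Setting $\chi(t,z):=\chi(\ut(t,\cdot,z))$ and
\begin{equation*}
v(t,\xi,z):=\ut(t,\xi,z)-\tau_M\vp(\xi+\chi(t,z))
\end{equation*}
yields the pointwise decomposition (\ref{decomposition}), together with the orthogonality $\langle \tau_M e_*,v(t,\cdot,z)\rangle=F(\chi(t,z),\ut(t,\cdot,z))=0$.

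The remaining point, which is the main technical step, is to upgrade this pointwise existence to parabolic $C^{2,\d}$ regularity of $\chi$ and $v$ in $(t,z)$. Classical Schauder parabolic estimates applied to (\ref{grosapreschange}) (with bounded coefficients and H\"older source $R_1$ and boundary data $R_2$) give that the map $(t,z)\mapsto \ut(t,\cdot,z)$ is $C^{2,\d}$ in the parabolic sense from $[0,T_{max})\times\o^\infty$ into $\mathfrak{X}$ (in fact into $C^{2,\d}_{\mathrm{loc}}(\R)\cap\mathfrak{X}$). Since the implicit function $w\mapsto\chi(w)$ is smooth between these Banach spaces, composition preserves the parabolic $C^{2,\d}$ regularity, which gives $\chi\in C^{2,\d}([0,T_{max})\times\o^\infty)$ and then $v\in C^{2,\d}([0,T_{max})\times\Omega^\infty)$ by difference. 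The hardest bookkeeping will be checking that the residual terms $R_1,R_2$ in (\ref{reste1})--(\ref{reste2}) remain uniformly $C^{\d/2,\d}$ bounded so that Schauder estimates deliver a uniform modulus of regularity on $\ut$, which in turn is what makes the composed map $(t,z)\mapsto\chi(t,z)$ enjoy the claimed parabolic H\"older regularity rather than mere continuity.
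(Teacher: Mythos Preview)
Your proposal is correct and follows the same route the paper indicates: the paper does not give a separate proof of this lemma but treats it as the multidimensional analogue of Lemma~\ref{lemnoyauimage}, whose proof is simply ``a consequence of the implicit function theorem, see \cite{henry}''. Your pointwise-in-$(t,z)$ application of the implicit function theorem, followed by parabolic Schauder estimates to recover $C^{2,\delta}$ regularity of $\chi$ and $v$, is exactly the intended argument.
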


\begin{remark}\label{pasdeM}
The considered decomposition is similar to (\ref{decompositionimagenoyau}) with a translation of $M.$ In order not to overburden the notations, \textbf{we will 
omit to mention this translation} in the operators $\ML,$ $P$ and $e_*$ and in the functions $\vp$ and $\vp'$ throughout the remainder of this paper. 
We write $v\in \MR(\ML).$
\end{remark}

\begin{remark}\label{dzvonR}
 Considering the regularity of the solution, operators $\ML$ and $\partial_{z_i}$ commute, and so do $e_*$ and $\partial_{z_i}$. Hence, $\xi\mapsto\partial_{z_i} v(t,\xi,z)$
 also belongs to $\MR(\ML).$
 \end{remark}

 \subsection{splitting of the problem}
 We insert the ansatz provided by Lemma \ref{lemnoyauimagegros} in (\ref{grosapreschange}). As in the previous section we use that $\vp$ is a solution of (\ref{bistableTW})
and Taylor expansions.  It yields the following equation in $(v,\chi):$
\begin{equation}\label{eqavtproj}
 \begin{cases}
\vp'\chi_t-\vp'\Delta_z\chi +v_t-\ML v-\Delta_z v = R_3   & (\xi,z)\in\Omega^\infty \\
 \lp\vp'\nabla_z\chi+\nabla_z v \rp.\overrightarrow{\nu}_{\omega^\infty} = R_4 & (\xi,z)\in\partial\Omega^\infty \\
  \ut(0,\xi,z) = \vp(\xi)
 \end{cases}
\end{equation}
where 
\begin{subequations}\label{Reste}
 \begin{align}
 R_3 = & R_1+\frac{v^2}{2}f''(b_1)+\vp''\sum_{i=1}^N \chi^2_{z_i}+\lp\chi\Delta_z\chi-\chi\chi_t\rp\vp''(\xi+b_2)+v\chi\vp'(\xi+b_3)f''(b_4) \label{Reste3} \\
 R_4 = & R_2+\vp''(\xi+b_5)\nabla_z\chi.\overrightarrow{\nu}_{\omega^\infty}. \label{Reste4}
 \end{align}
\end{subequations}

Now, as for the one dimensional case, we set for some $\a>0,$ for all $M>0,$
\begin{equation}\label{TaMgros}
 T(\a,M):=\sup \left\{ T>0,\forall t<T,\lV\chi\rV_{C^{2,\d}\lp[0,t]\times(-1,1)\rp}+\lV v \rV_{C^{2,\d}\lp[0,t]\times\Omega^\infty\rp}\leq e^{\a(ct-M)}\right\}.
\end{equation}

\subsection{Estimates on $R_3,R_4$}
Let us define
\begin{equation}\label{defalpha}
 \a:=\frac{1}{4}\min \{ \a_1,\kappa\},\qquad \e_2 = \inf \psi
\end{equation}
where $\kappa$ is set in (\ref{dominationPhi}) and $\a_1,\psi$ are given in Lemma \ref{lemtail}.

\begin{lem}\label{controlrestes}
Let $u$ be the solution of (\ref{cauchygros}) and equivalently $(v,\chi)$ the solution of (\ref{eqavtproj}). Then, there exist $M_1>0,$ $N_1>0$ and 
some positive constant $C$ such that for all $M\geq M_1,$ for all $\displaystyle T<\inf\{ T_{max},T(\a,M),\frac{M}{c}-N_1\}$ we have the following estimates:
\begin{equation}
\lV R_3\rV_{C^{0,\d}([0,T]\times\Omega^\infty)} + \lV R_4\rV_{C^{1,\d}([0,T]\times\partial\Omega^\infty)} \leq C e^{2\a(cT-M)}.
\end{equation}
\end{lem}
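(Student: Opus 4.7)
Write $R_3 = R_1 + R_3^q$ and $R_4 = R_2 + R_4^q$, where $R_3^q, R_4^q$ are the Taylor remainders appearing explicitly in (\ref{Reste3})-(\ref{Reste4}). Every contribution to $R_3^q, R_4^q$ is at least quadratic in $v, \chi$ and their first and second $(t,\xi,z)$-derivatives, with uniformly bounded coefficients (depending only on $\vp, \vp', \vp''$ and the bounded $f$-terms on $\{|\chi|\leq 1\}$). By the definition (\ref{TaMgros}) of $T(\a,M)$, $\lV v\rV_{C^{2,\d}}+\lV \chi\rV_{C^{2,\d}}\leq e^{\a(cT-M)}$ on $[0,T]$, so a Hölder product estimate gives immediately $\lV R_3^q\rV_{C^{0,\d}}+\lV R_4^q\rV_{C^{1,\d}}\leq Ce^{2\a(cT-M)}$. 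It remains to treat $R_1$ and $R_2$, which I do via a two-region argument.

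\textbf{Left region.} In (\ref{reste1})-(\ref{reste2}), every coefficient attached to a derivative of $\ut$ is a component of $D\Phi-Id$, a second derivative of $\Phi$, the quantity $(\partial_{y_i}\Phi_i)^2-1$, an off-diagonal $\partial_{y_i}\Phi_j$, or the normal-vector error $\overrightarrow{\nu}-\overrightarrow{\nu}^\infty\circ\overrightarrow{\Phi}$; by (\ref{hypdomain2}) and (\ref{dominationPhi}), each such factor is dominated pointwise at $(x,y)=(\xi+ct,\Phi^{-1}(\xi+ct,z))$ by $C\min(e^{\k x},1)$. Set $X_\star:=\tfrac{2\a}{\k}(cT-M)<0$. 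On $\{x\leq X_\star\}$, every $\Phi$-factor is bounded by $Ce^{\k X_\star}=Ce^{2\a(cT-M)}$, while the decomposition (\ref{decomposition}) together with the $T(\a,M)$-bound provides $\lV\ut\rV_{C^{2,\d}}\leq C$ uniformly; multiplying closes the estimate for $R_1, R_2$ on this half of $\Omega^\infty$.

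\textbf{Right region via Lemma \ref{lemtail}.} On $\{x\geq X_\star\}$, the $\Phi$-factors are only bounded, so I must show that $\ut$ itself, and its derivatives, are of order $e^{2\a(cT-M)}$ there. Compare $u$ with the super-solution $\us(t,x,y)=\e\psi(x,y)e^{-\a_1(x-ct)}$ of Lemma \ref{lemtail}, valid on $\Omega\cap\{x>ct\}$. The initial datum $\vp(x+M)\leq Ce^{\la(x+M)}$ on $\{x\geq 0\}$ is dominated by $\us(0,\cdot)$ for $\e\gtrsim e^{-|\la|M}$ (using $\a_1<|\la|$ and (\ref{estimeesbistable})); the trace at $x=ct$, written via (\ref{decomposition}) as $\vp(M-ct+\chi)+v$, is at most $Ce^{|\la|(cT-M)}+e^{\a(cT-M)}$, absorbed by $\us(t,ct,\cdot)$ after enlarging $\e$ by a fixed factor. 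Since the choice of $\a$ in (\ref{defalpha}) and the constraint (\ref{hypdomain4}) imply $|\la|\geq 2\a$, the parabolic maximum principle yields $|u(t,x,y)|\leq Ce^{2\a(cT-M)}e^{-\a_1(x-ct)}$ on $\Omega\cap\{x>ct\}$. Parabolic Schauder estimates applied to (\ref{cauchygros}) upgrade this $L^\infty$ control to $\lV u\rV_{C^{2,\d}}\leq Ce^{2\a(cT-M)}$ on a neighbourhood of $\{x\geq X_\star+1\}$, and the uniformly $C^3$ diffeomorphism $\overrightarrow{\Phi}$ transfers the bound to $\ut$. Multiplying by the bounded $\Phi$-factors closes the estimate for $R_1, R_2$ on the complementary half.

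\textbf{Main obstacle.} The genuinely delicate part is the intermediate strip $X_\star\leq x\leq ct$, which lies outside the super-solution's domain but on which the $\Phi$-factors fail to decay. The argument is saved by the observation that throughout this strip $\xi+M\geq M-cT\geq cN_1>0$ is large, so in the expansion $\ut_{z_i}=\vp'(\xi+M+\chi)\chi_{z_i}+v_{z_i}$ arising from (\ref{decomposition}), the factor $\vp'(\xi+M+\chi)$ is exponentially small by (\ref{estimeesbistable}), supplying the extra decay needed for the $z$-derivatives; the residual $v_{z_i}$ is then controlled by propagating the right-region bound inward through the strip via linear interior parabolic estimates. Finally, upgrading from $L^\infty$ to $C^{0,\d}$ (and to $C^{1,\d}$ for $R_4$) consumes the extra order of regularity made available by the uniformly $C^3$ assumption on $\overrightarrow{\Phi}$ in (\ref{hypdomain2}), which is what keeps the Hölder norms of the $\Phi$-factors finite throughout the argument.
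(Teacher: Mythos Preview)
Your treatment of the quadratic remainders $R_3^q,R_4^q$ and of the left region $\{x\leq X_\star\}$ is essentially the paper's argument and is fine. The gap is in the right region.

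\medskip
\textbf{Where the argument breaks.} When you apply Lemma~\ref{lemtail} on $\{x>ct\}$, the lateral trace at $x=ct$ is, via (\ref{decomposition}),
\[
u(t,ct,y)=\vp\lp M+\chi(t,z)\rp+v(t,0,z).
\]
The first term is indeed $O\lp e^{-|\la|M}\rp\leq O\lp e^{2\a(cT-M)}\rp$, but the second is only controlled by the a~priori bound from (\ref{TaMgros}), namely $|v|\leq e^{\a(ct-M)}$. Since $cT-M<0$ and $\a>0$, one has $e^{\a(cT-M)}\gg e^{2\a(cT-M)}$, so the trace is \emph{not} absorbed by $\us(t,ct,\cdot)$ ``after enlarging~$\e$ by a fixed factor''; you are forced to take $\e\sim e^{\a(cT-M)}$. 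The maximum principle then yields only
\[
|u(t,x,y)|\leq Ce^{\a(cT-M)}e^{-\a_1(x-ct)}\quad\text{on }\{x>ct\},
\]
and near $x=ct$ (where the $\Phi$-factors do not decay, since $x\geq 0$) this gives $R_1,R_2=O\lp e^{\a(cT-M)}\rp$ only, one power of $e^{\a(cT-M)}$ short of the claim. The ``main obstacle'' paragraph does not close this: the $v$-derivatives in the decomposition $\ut_{z_i}=\vp'(\xi+M+\chi)\chi_{z_i}+v_{z_i}$ are again only $O\lp e^{\a(cT-M)}\rp$ by (\ref{TaMgros}), and there is no mechanism (``propagating inward via interior estimates'') that converts an $e^{\a}$-bound on $v$ into an $e^{2\a}$-bound on a region where the equation itself carries no extra smallness.

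\medskip
\textbf{The missing idea.} The paper exploits the fact that the super-solution of Lemma~\ref{lemtail} can be \emph{shifted}: it compares $u$ with $\psi(x,y)e^{-\a_1(x-ct+M-K_0)}$ on the much larger set $\{\xi>K_0-M\}$, for a fixed $K_0$ chosen so that $\vp(K_0)<\e_2/2$. At this new lateral boundary $\xi=K_0-M$ the super-solution equals $\psi\geq\e_2$, so one only needs $u<\e_2$ there, which follows from $\vp(K_0+\chi)<\e_2/2$ and $|v|<\e_2/2$ (the latter ensured by the choice of $N_1$). No quantitative smallness of $v$ is consumed. The exponential decay of the super-solution then delivers
\[
\lV\ut\rV_{C^{2,\d}\lp[0,T]\times\Omega^\infty\cap\{\xi>\tilde X\}\rp}\leq Ce^{-\a_1(\tilde X+M-K_0)}
\]
for any $\tilde X>K_0-M+1$. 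Choosing the split point at $\tilde X=-M+\tfrac12(M-cT)$, which lies well inside $\{\xi>K_0-M\}$ once $N_1$ is enlarged, gives $e^{-\a_1(M-cT)/2}\leq e^{2\a(cT-M)}$ since $\a\leq\a_1/4$; and on $\{\xi<\tilde X\}$ the $\Phi$-factors decay as $e^{\k(\tilde X+ct)}\leq e^{-\k(M-cT)/2}\leq e^{2\a(cT-M)}$. Thus both halves contribute at the right order, and no intermediate strip ever appears.
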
 
We need two steps to prove the above lemma. In the next paragraph we use lemma \ref{lemtail} to control the solution $u$ for large $x.$ 
Then we use these estimates to globally control $R_{1,2,3}.$

\paragraph{estimate of the tail of $u$ thanks to Lemma \ref{lemtail}}
Let us recall that as 0 and 1 are respectively sub and supersolution for (\ref{cauchygros}), $0<u<1$ for all $t>0,(x,y)\in\Omega.$ From (\ref{estimeesbistable}) we have
for any $K_0>0$
\begin{align}
 u(t=0,x,y) = & \vp(x+M) \nonumber \\
  \leq & C_2 e^{\la\lp x+M-K_0\rp} e^{\la K_0}. \nonumber
\end{align}
Let us choose $K_0$ such that $\displaystyle C_2 e^{\la K_0}<\frac{\e_2}{2}$ and $M_1>K_0.$ Knowing that $0<\a_1<-\la,$ we can apply Lemma \ref{lemtail}.
It yields
\begin{equation*}
\begin{array}{c}
 0<u(t,x,y)<\lV \psi\rV_\infty e^{-\a_1\lp x-ct+M-K_0\rp}, \\ 
 \forall x>ct+K_0-M,\ \forall t\in[0,\min \{T_{max},T(\a,M),\frac{M}{c}-N_1\}]
\end{array}
\end{equation*}
where $\displaystyle N_1=\frac{1}{\a c}\log(\frac{2}{\e_2})+1.$ Indeed, this choice of $N_1$ ensures that $|v|<\frac{\e_2}{2},$ hence the inequality  $\displaystyle u(t,ct-M+K_0,y)<\e_2\leq \psi(y)$ 
is valid on the desired interval in time and the function $(t,x,y)\mapsto \psi(y)e^{-\a_1(x-ct+M-K_0)}$ is strictly above $u$ on the considered domain.
This provides a $L^\infty$ estimate for $u.$ With the regularity of $f,$ $u$ satisfies a parabolic equation with Neumann boundary condition and a second term satisfying the same $L^\infty$ estimate.
Hence, with classical parabolic estimates (see \cite{Lady}, theorem 10.1 p.204 for the Hölder regularity, then theorem 10.1 p.351 for the $C^{2,\d}$ regularity)
we have for some constant $C$ and some $\d\in(0,1)$
\begin{equation*}
 \begin{array}{c}
 \lV u\rV_{C^{2,\d}\lp [0,T]\times \Omega\cap\{x>X\}\rp} \leq C e^{-\a_1\lp X-cT+M-K_0\rp}, \\
 \forall T<\min \{T_{max},T(\a,M),\frac{M}{c}-N_1\},\forall X>cT+K_0-M+1.
 \end{array}
\end{equation*}
Now, $\vec{\Phi}$ is a $C^3-$diffeomorphism, hence a diffeomorphism with regularity $C^{2,\d}$ for all $\d\in(0,1).$ So the above estimate works on $\ut.$ In the moving framework,
it yields for some constant $C$
\begin{equation}\label{queueutilde}
 \begin{array}{c}
 \lV \ut\rV_{C^{2,\d}\lp [0,T]\times \Omega^\infty\cap\{\xi>\tilde{X}\}\rp} \leq C e^{-\a_1\lp \tilde{X}+M-K_0\rp}, \\
 \forall T<\min \{T_{max},T(\a,M),\frac{M}{c}-N_1\},\ \forall \tilde{X}>K_0-M+1.
 \end{array}
\end{equation}

\paragraph{Global control of $R_3, R_4$}

In this paragraph we conclude the proof of lemma \ref{controlrestes}. We give details only for $R_3,$ the other being similar.
We deal with $R_1,$ defined by (\ref{reste1}) by combining 
the estimate (\ref{queueutilde}) with the assumption (\ref{dominationPhi}). The quadratic term $R_3-R_1$ defined in (\ref{Reste3}) 
is controlled by the definition of $T(\a,M)$ in (\ref{TaMgros}).

From our hypotheses (\ref{hypdomain1}-\ref{hypdomain2}) on $\Omega$ the function $\Phi$ and all its derivatives up to the third order 
are uniformly bounded in $\Omega.$
Similarly, as $0<u<1$ for all $t>0,$ $(x,y)\in\Omega,$ parabolic estimates provide a uniform bound for $\ut_\xi$ and $\nabla_z \ut.$
Let us recall that, with (\ref{Phitranslate}), the derivatives of $\Phi$ can be considered as functions of the $(\xi,z)-$variables.
Hypothesis (\ref{dominationPhi})
gives, translated in the moving framework coordinates
\begin{equation}\label{controlPhi}
 \lV D_x\Phi(.+ct,.)\rV_{C^2(\Omega\cap\{\xi<\tilde{X}\})}+ \lV D_y\Phi(.+ct,.)-Id\rV_{C^2(\Omega\cap\{\xi<\tilde{X}\})} \leq C e^{\kappa (\tilde{X}+ct)}.
\end{equation}
We update the value of $N_1$ and $M_1$ by
$$
N_1\longleftarrow\max\lp N_1,2\frac{K_0+1}{c}\rp,\ M_1\longleftarrow \max\lp M_1,cN_1\rp.
$$
Thus, for all $T<\min \{T_{max},T(\a,M),\frac{M}{c}-N_1\},$
we set in (\ref{queueutilde}) and (\ref{controlPhi})
$$\tilde{X}=-M+\frac{1}{2}(M-cT).$$	 It yields for some positive constant $C$
\begin{equation*}
 \lV R_1\rV_{C^{1,\d}\lp[0,T]\times\Omega^\infty\rp}\leq C\lp e^{-\frac{\a_1}{2}(M-cT)}+e^{-\frac{\kappa}{2}(M-cT)}\rp.
\end{equation*}
We recall that $\a$ is given by (\ref{defalpha}). Using the definition of $T(\a,M)$ given by (\ref{TaMgros}) and the regularity of $\vp$ and $f,$ up to a smaller $\d,$
the proof of Lemma \ref{controlrestes} is completed for $R_3.$ The other are similar.
\qed

\subsection{Equation on $\MR(\ML)$}
We project the system (\ref{eqavtproj}) on $\MR(\ML).$ Using Remark \ref{dzvonR} it provides the following system for $v$
\begin{equation}\label{eqsurv}
 \begin{cases}
 \partial_t v-\ML v-\Delta_z v = Q\lc R_3\rc & t>0,(\xi,z)\in\Omega^\infty \\
 \partial_{\nu^\infty} v = Q\lc R_4\rc & t>0, (\xi,z)\in\partial\Omega^\infty \\
 v(0,\xi,z) = 0.
 \end{cases}
\end{equation}
We state a lemma similar to Lemma \ref{lemmeimage} given in the previous section.

\begin{lem}\label{Linftyimage}
Let $v$ be the solution of (\ref{eqsurv}). There exists a positive constant $C$ such that for all $M>M_1,$ for all $T<\min \{T_{max},T(\a,M),\frac{M}{c}-N_1\},$
$$
\lV v\rV_{C^{2,\d}\lp[0,T]\times\Omega^\infty\rp} \leq C e^{2\a(cT-M)}
$$
where $\a,M_1,N_1$ are given by Lemma \ref{controlrestes}.
\end{lem}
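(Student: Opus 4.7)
The plan is to reproduce the scheme of Lemma \ref{lemmeimage} in the multi-dimensional setting, with three new features to manage: the presence of the transverse Laplacian $\Delta_z$, the fact that $v\in\MR(\ML)$ pointwise in $z$ only, and the Neumann-type boundary condition on $\partial\Omega^\infty$ which yields a boundary contribution when we integrate by parts.

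\textbf{Symmetrisation and energy estimate.} I would set $w(t,\xi,z):=e^{c\xi/2}v(t,\xi,z)$. A direct computation then shows that $w$ satisfies
\begin{equation*}
  \partial_t w-\partial_{\xi\xi}w-\Delta_z w+\lp\tfrac{c^2}{4}-f'(\vp)\rp w=e^{c\xi/2}Q\lc R_3\rc
\end{equation*}
on $\Omega^\infty$, with $\partial_{\nu^\infty}w=e^{c\xi/2}Q\lc R_4\rc$ on $\partial\Omega^\infty$ and $w(0)=0$. Multiplying by $w$ and integrating by parts in both $\xi$ and $z$ over $\Omega^\infty$ yields an identity whose left-hand side contains $\tfrac{1}{2}\tfrac{d}{dt}\lV w\rV_2^2$, $\lV w_\xi\rV_2^2$, $\lV\nabla_z w\rV_2^2$ and $\int_{\Omega^\infty}(c^2/4-f'(\vp))w^2$. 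Since by Lemma \ref{lemnoyauimagegros} the slice $\xi\mapsto v(t,\xi,z)$ lies in $\MR(\ML)$ for every $z\in\omega^\infty$, the one-dimensional spectral gap bound (\ref{Alessandro}) holds slice-by-slice and, after integration in $z$, gives $\lV w_\xi\rV_2^2+\int(c^2/4-f'(\vp))w^2\geq\rho_1\lV w\rV_2^2$.

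\textbf{Gronwall and parabolic regularity.} The right-hand side of the energy identity consists of $\int_{\Omega^\infty}e^{c\xi/2}w\,Q\lc R_3\rc$ plus a boundary integral $\int_{\partial\Omega^\infty}e^{c\xi}v\,Q\lc R_4\rc$. Using Lemma \ref{controlrestes} together with the pointwise tail decay of $\ut$ inherited from Lemma \ref{lemtail} and the exponential smallness of $D\Phi-\mathrm{Id}$ for $\xi+ct<0$ (which between them yield decay of $R_3,R_4$ in $\xi$), Young's inequality and a standard trace inequality let me absorb half the coercive term and bound the remainder by $Ce^{4\a(ct-M)}$. A Gronwall argument then yields $\lV w(t)\rV_2\leq Ce^{2\a(ct-M)}$. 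Since the equation for $w$ has bounded coefficients and a source which is uniformly $C^{0,\d}$ in $(t,\xi,z)$, while the oblique boundary data is $C^{1,\d}$ by Lemma \ref{controlrestes}, classical parabolic $L^p$ and Schauder theory (\cite{Lady}) upgrades this to $\lV w\rV_{C^{2,\d}([0,T]\times\Omega^\infty)}\leq Ce^{2\a(cT-M)}$.

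\textbf{From $w$ back to $v$.} As in the one-dimensional case, dividing by $e^{c\xi/2}$ gives the target estimate on $v$ only on $\{\xi\geq-A(T)\}$ with $A(T)=\sigma(M-cT)$ and $\sigma=\k/(2c)$, as in (\ref{definitionA}). On the complementary region $\{\xi\leq-A(T)\}$ the coefficient $-f'(\vp)$ is bounded below by $-f'(1)/2>0$ (up to enlarging $N_1$), so $v$ satisfies a coercive linear parabolic problem with the Neumann condition $\partial_{\nu^\infty}v=Q\lc R_4\rc$ on $\partial\omega^\infty$, Dirichlet data on $\{\xi=-A(T)\}$ provided by the previous step, and source $Q\lc R_3\rc$. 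The parabolic maximum principle together with boundary Schauder estimates then produces the claimed global $C^{2,\d}$ bound.

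\textbf{Main obstacle.} The delicate point is the boundary integral $\int_{\partial\Omega^\infty}e^{c\xi}vQ\lc R_4\rc$ in the energy identity: since the weight $e^{c\xi}$ is not integrable at $+\infty$, the uniform bound on $Q\lc R_4\rc$ from Lemma \ref{controlrestes} is not by itself sufficient. One genuinely needs to extract the additional decay of $R_4$ in $\xi$ (exponential from ahead of the front via Lemma \ref{lemtail}, and from the smallness of $\overrightarrow{\nu}-\overrightarrow{\nu}^\infty\circ\overrightarrow{\Phi}$ behind it via (\ref{dominationPhi})) in order to make the weighted integrals converge and close the Gronwall estimate.
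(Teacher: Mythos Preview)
Your route is \emph{not} the one the paper takes. The paper abandons the weighted energy method of Lemma \ref{lemmeimage} in the cylinder case and instead argues directly with the Duhamel formula for the semigroup $e^{t(\ML+\Delta_z)}$ acting on $\MR(\ML)$. Concretely, it lifts the boundary datum $Q[R_4]$ to a function $G(t,\cdot)\in C^{2,\d}(\Omega^\infty)$ with $\partial_{\nu^\infty}G=Q[R_4]$ and $G(t)\in\MR(\ML)$, then uses the commutation $e^{t(\ML+\Delta_z)}=e^{t\ML}e^{t\Delta_z}$ (Remark \ref{dzvonR}) together with the decay $\lV e^{t\ML}\rV\leq Ce^{-\rho t}$ on $\MR(\ML)$ from (\ref{decML}) to obtain the $L^\infty$ bound \emph{directly} from the uniform $C^{0,\d}$ and $C^{1,\d}$ estimates of Lemma \ref{controlrestes}; Schauder theory then gives $C^{2,\d}$. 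No weighted $L^2$ space, no symmetrisation, no splitting at $\xi=-A(T)$.

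The point of this change is precisely to avoid the obstacle you correctly identify. In your scheme the right-hand side of the energy identity contains $\int_{\Omega^\infty}e^{c\xi}|Q[R_3]|^2$ and the boundary term $\int_{\partial\Omega^\infty}e^{c\xi}v\,Q[R_4]$, and closing these requires $\xi$-wise exponential decay of $R_3,R_4$ at $+\infty$ beating the weight $e^{c\xi/2}$. But the only decay of $\ut$ ahead of the front supplied by Lemma \ref{lemtail} (and hence by (\ref{queueutilde})) is $e^{-\a_1\xi}$, and nothing in the construction of $\psi$ forces $\a_1>c/2$: in the proof of Lemma \ref{lemtail} the final constraint $\a<1/(a+\tfrac{3r}{2})$ with $a>-4\lV d\rV_{C^2}/f'(0)$ can make $\a_1$ arbitrarily small when the boundary curvature is large. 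So the integrability you need is \emph{not} guaranteed by the available tools, and your Gronwall step does not close as written. The paper's semigroup argument needs only the uniform bounds of Lemma \ref{controlrestes} and therefore sidesteps this issue entirely.
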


\begin{proof}
Once again, the proof lies on parabolic estimates. The main point is to get the estimate in the $L^\infty-$norm. Then, using the 
continuity of $Q$ with respect to the Hölder norms, the result follows from Theorem 10.1 p.351 in \cite{Lady}.

For all $t>0$ let $G(t,\xi,z)$ be an extension of $Q[R_4]$ in $\Omega^\infty,$ \textit{i.e.} (see Theorem 0.3.2 in \cite{Lunardi} for instance)
\begin{equation}\label{ExtensionOp}
 \begin{cases}
 \lV G(t,.)\rV_{C^{2,\d}(\Omega^\infty)} \leq C \lV Q[R_4](t,.)\rV_{C^{1,\d}(\partial\Omega^\infty)}, \qquad t\geq 0 \\
 \partial_{\nu^\infty}G = Q[R_4],\qquad t\geq 0, \ (\xi,z)\in\partial \Omega^\infty.
 \end{cases}
\end{equation}

Up to consider $Q[G]$ instead of $G$ we can assume that $G(t)\in\MR(\ML),$ and the whole equation (\ref{eqsurv}) stays in the space $\MR(\ML).$
Let $e^{t\Delta_z}$ be the analytic semi-group generated by the Laplace operator in the $z-$direction with Neumann boundary conditions. 
It is well-known (see \cite{Lady}) that it is a bounded operator in $L^\infty$ as well as in $C^{2,\d}.$ Moreover, from 
remark \ref{dzvonR}, we have that $e^{t(\Delta_z+\ML)} = e^{t\Delta_z}e^{t\ML} = e^{t\ML}e^{t\Delta_z}.$
Finally, spectral considerations give that the decay property (\ref{decML}) is valid for the $L^\infty-$norm as well as for 
the $C^2-$norm.

The Duhamel's formula for (\ref{eqsurv}) gives (see Theorem 5.1.17 in \cite{Lunardi})
\begin{align}
 v(t) = & e^{t(\Delta_z+\ML)}G(0) + \int_0^t e^{(t-s)(\Delta_z+\ML)}\lp Q[R_3](s)+\lp \Delta_z+\ML\rp G(s)\rp ds \nonumber \\
  & - \lp\Delta_z+\ML\rp \int_0^t e^{(t-s)(\Delta_z+\ML)}\lp G(s)-G(0)\rp ds+G(0). \label{Duhamelv}
\end{align}
Applying lemma \ref{controlrestes} and the boundedness of the operators $\ML$ and $\Delta_z$ from $C^2(\Omega^\infty)$ onto $BUC(\Omega^\infty)$ 
the equation (\ref{Duhamelv}) gives the following $L^\infty$ estimates: \\
for all $\displaystyle t < \min \{T_{max},T(\a,M),\frac{M}{c}-N_1\}$ we have
\begin{align}
\lV v(t)\rV_{L^\infty(\Omega^\infty)} & \leq Ce^{2\a(ct-M)}\lp 1+e^{-\rho t}\rp+C\int_0^t e^{\rho(s-t)}e^{2\a(cs-M)}ds \nonumber \\
  & \leq  Ce^{2\a(ct-M)}.\label{estimeev}
\end{align}
The above control (\ref{estimeev}) combined with parabolic estimates concludes the proof of Lemma \ref{Linftyimage}.

\end{proof}

\subsection{Equation on $\MN(\ML)$}
We project the system (\ref{eqavtproj}) on $\MN(\ML).$ It yields the following system for $\chi:$
\begin{equation}\label{eqsurchi}
 \begin{cases}
 \partial_t \chi -\Delta_z\chi = \langle e_*,R_3\rangle , & t>0, z\in\omega^\infty \\
\partial_{\nu^\infty}\chi = \langle e_*,R_4\rangle , & t>0, z\in\partial \omega^\infty \\
 \chi(0,z) = 0.
 \end{cases}
\end{equation}
We have the following lemma:
\begin{lem}\label{Linftynoyau}
 Let $\chi$ be the solution of (\ref{eqsurchi}). There exists a positive constant $C$ such that for all $M>M_1,$ for all 
 $T<\min\left\{ T_{max},T(\a,M),\frac{M}{c}-N_1\right\},$
 \begin{equation*}
 \lV\chi\rV_{C^{2,\d}\lp[0,T]\times\omega^\infty\rp} \leq Ce^{2\a(cT-M)} 
 \end{equation*}
 where $\a,M_1,N_1$ are given by Lemma \ref{controlrestes}.
\end{lem}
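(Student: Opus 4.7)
The strategy follows closely that of Lemma~\ref{Linftyimage}: the plan is to first establish an $L^\infty$ bound on $\chi$ via a Duhamel argument with the Neumann heat semigroup on the cross-section $\omega^\infty$, then upgrade it to $C^{2,\d}$ regularity by parabolic Schauder estimates. The first step is to transfer the bounds of Lemma~\ref{controlrestes} to the projected right-hand sides. Since $e_*$ is a bounded linear functional in the $\xi$-variable (pairing against $e^{c\xi}\vp'(\xi)$, which decays rapidly at $\pm\infty$ by (\ref{estimeesbistable})) and commutes with $\partial_t$ and derivatives in $z$, one expects
$$
\lV \langle e_*,R_3\rangle\rV_{C^{0,\d}([0,T]\times\omega^\infty)}+\lV \langle e_*,R_4\rangle\rV_{C^{1,\d}([0,T]\times\partial\omega^\infty)} \leq Ce^{2\a(cT-M)}.
$$

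Next, to handle the inhomogeneous Neumann boundary data, I would imitate the construction from the proof of Lemma~\ref{Linftyimage}: extend $\langle e_*,R_4\rangle$ to a function $H(t,z)$ defined on $\omega^\infty$ with $\partial_{\nu^\infty}H=\langle e_*,R_4\rangle$ on $\partial\omega^\infty$ and $\lV H(t)\rV_{C^{2,\d}(\omega^\infty)}\leq C\lV \langle e_*,R_4\rangle(t)\rV_{C^{1,\d}(\partial\omega^\infty)}$ (Theorem~0.3.2 in \cite{Lunardi}), then reduce (\ref{eqsurchi}) to a problem for $\chi-H$ with homogeneous Neumann data and a source still controlled by $Ce^{2\a(cT-M)}$ in $C^{0,\d}$. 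An analogue of the Duhamel formula (\ref{Duhamelv}), applied with the Neumann heat semigroup $e^{t\Delta_z}$ on $\omega^\infty$, then yields an $L^\infty$ bound on $\chi$.

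The only genuine subtlety, compared with Lemma~\ref{Linftyimage}, is that on $\MN(\ML)$ the operator $\ML$ has disappeared entirely, and the Neumann Laplacian $\Delta_z$ on the bounded cross-section $\omega^\infty$ has a nontrivial kernel (the constants). Thus $e^{t\Delta_z}$ does not decay: one only has $\lV e^{t\Delta_z}\rV_{L^\infty\to L^\infty}\leq C$. This is however compensated by the exponential growth in time of the source: since the integrand is bounded by $Ce^{-2\a M}e^{2\a cs}$ and $2\a c>0$, the Duhamel integral still gives
$$
\lV \chi(t)\rV_{L^\infty(\omega^\infty)}\leq Ce^{-2\a M}\int_0^t e^{2\a cs}\,ds\leq \frac{C}{2\a c}e^{2\a(ct-M)}.
$$
Applying parabolic Schauder estimates (Theorem~10.1 p.~351 in \cite{Lady}) to (\ref{eqsurchi}), with right-hand sides now known to be of order $e^{2\a(cT-M)}$ in the relevant Hölder norms, finally promotes this to the announced $C^{2,\d}$ bound, up to possibly shrinking $\d$.
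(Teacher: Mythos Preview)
Your proposal is correct and follows essentially the same approach as the paper: project the estimates of Lemma~\ref{controlrestes} through $e_*$, extend the boundary data to a function $G$ (your $H$) on $\omega^\infty$, apply a Duhamel formula with the Neumann heat semigroup $e^{t\Delta_z}$, and upgrade to $C^{2,\d}$ by parabolic Schauder estimates. You in fact go slightly further than the paper by explicitly noting that the absence of a spectral gap for $\Delta_z$ on $\omega^\infty$ is harmless because the source term already carries the factor $e^{2\a cs}$, so the time integral $\int_0^t e^{2\a cs}\,ds$ supplies the bound $Ce^{2\a(ct-M)}$ without any semigroup decay; the paper leaves this point implicit.
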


The proof is similar to the proof of Lemma \ref{Linftyimage} and we skip it.
The linear form $e_*$ is continuous with respect to the Hölder norms. Hence, form lemma \ref{controlrestes} there 
exists $g,G$ such that for all 
$t<\min\left\{ T_{max},T(\a,M),\frac{M}{c}-N_1\right\},$
\begin{gather*}
 \langle e_*,R_3(t,.,z)\rangle = g(t,z),\qquad  \langle e_*,R_4(t,.,z)\rangle = \partial_{\nu^\infty}G(t,z), \\
\lV g(t)\rV_{C^{2,\d}(\omega^\infty)} + \lV G(t)\rV_{C^{2,\d}(\omega^\infty)}\leq Ce^{2\a(ct-M)}.
\end{gather*}
A Duhamel's formula gives similar $L^\infty$ estimates and parabolic estimates conclude the proof.
\qed

%

\subsection{Conclusion of the proof of Proposition \ref{grosprop}}
We conclude in the same way as for the proof of Proposition \ref{toyprop}. Thanks to the two previous Lemmas, it is easy to construct $M_0\geq M_1,$ $N_0\geq N_1$ such that
for all $M\geq M_0,$ $T_{max} = T(\a,M) = \frac{M}{c}-N_0.$ Stating $\gamma=\a,$ the proof is finished.
\qed	

\section*{Remarks and questions}
\begin{itemize}
 \item In theorem \ref{solentieretoy}, we do not prove the uniqueness of such an entire solution.
 However, if one is able to prove the monotonicity in time of the solution, or that these solutions are transition fronts results from \cite{BHM} or
 \cite{BHgeneralisedTW} may provide the uniqueness.
 
 \item In both theorems we ask for an exponential decay of the perturbation, but it is arbitrary. Hence a natural guess is that it is not optimal 
 and some weaker convergence rate may be sufficient. But the proof also suggests that a too slow convergence may prevent the existence of such entire solutions.
%
 
 \item Most of the properties proved in \cite{BBC} are still valid for these domains. The invasion properties, theorems 1.5, 1.7, 1.8, 1.9 apply in our context.
 The blocking property, theorem 1.6, is more intricate but can easily be adapted.
\end{itemize}

\newpage 
\bibliographystyle{plain}
\footnotesize
\bibliography{biblio_generale}
\end{document}